\newtheorem{theorem}{Theorem}[section]
\newtheorem{lemma}[theorem]{Lemma}
\newtheorem{corollary}[theorem]{Corollary}
\theoremstyle{definition}
\newtheorem{definition}[theorem]{Definition}
\newtheorem{remark}[theorem]{Remark}
\newtheorem{example}[theorem]{Example}
\newtheorem{conjecture}[theorem]{Conjecture}
\begin{document}

\title{Rational approximation to surfaces defined by polynomials in one variable}

\author{Johannes Schleischitz} 

\address{Institute of Mathematics, Boku Vienna, Austria  \\ 
johannes.schleischitz@boku.ac.at}


\begin{abstract}
We study the rational approximation properties of special manifolds defined by a set of
polynomials with rational coefficients. Mostly we will assume the case of all polynomials 
to depend on only one variable. In this case the manifold can be viewed as a Cartesian product of polynomial curves
and it is possible to generalize recent results concerning such curves with similar concepts. There is
hope that the method leads to insights on how to treat more general manifolds defined
by arbitrary polynomials with rational coefficients.  
\end{abstract}

\maketitle

{\footnotesize{Supported by the Austrian Science Fund FWF grant P24828.} \\

{\em Keywords}: Diophantine approximation, Hausdorff dimension, Khintchine and Jarn\'ik theory \\
Math Subject Classification 2010: 11J13, 11J83}

\vspace{4mm}

\section{Definitions and known results}

\subsection{Definitions}

We investigate the set of points $\underline{\zeta}=(\zeta_{1},\ldots,\zeta_{k})$
on special manifolds in $\mathbb{R}^{k}$ which are 
simultaneously approximable to some given degree by rational vectors.
By this we mean that $\max_{1\leq j\leq k} \vert \zeta_{j}-p_{j}/q\vert$ should be small
in terms of (negative powers of) $q$ for certain $(p_{1},\ldots,p_{k},q)\in{\mathbb{Z}^{k+1}}$ 
with arbitrarily large $q$. 
It will be more convenient to work with the equivalent problem of minimizing the expression
$\max_{1\leq j\leq k} \Vert q\zeta_{j}\Vert$ for certain arbitrarily large $q$, 
where $\Vert.\Vert$ denotes the distance of a real number to the nearest integer. 
More precisely, we introduce the following notation for the set of points approximable to a given degree, 
first without the restriction to some submanifold of $\mathbb{R}^{k}$, already used in~\cite{schlei2}.

\begin{definition}
Let $k\geq 1$ be an integer and $\lambda>0$ a parameter. 
Let $\mathscr{H}^{k}_{\lambda}$ be the set of 
$\underline{\zeta}=(\zeta_{1},\ldots,\zeta_{k})\in{\mathbb{R}^{k}}$ such that for all fixed
$\epsilon>0$ the estimate
\[
\max_{1\leq j\leq k} \Vert q\zeta_{j}\Vert \leq q^{-\lambda+\epsilon}
\]
has arbitrarily large integer solutions $q$.
\end{definition}

By Dirichlet's Theorem, see for example~\cite[p. 177-192]{lang},
we have $\mathscr{H}^{k}_{\lambda}=\mathbb{R}^{k}$ for $\lambda\leq 1/k$.
Concerning larger parameters it is known thanks to Khintchine~\cite{khint} that 
any $\epsilon>0$ the set of points in $\mathscr{H}^{s}_{1/k+\epsilon}$
has $k$-dimensional Lebesgue measure $0$. 
Jarn\'ik~\cite{jarnik} more generally established the Hausdorff dimension of the sets $\mathscr{H}^{k}_{\lambda}$.
Before we state his result we briefly recall the notion of Hausdorff dimension. For our purposes
it suffices to restrict to Euclidean spaces and dimension functions of the form $t\to t^{s}$, and we just refer 
to~\cite{berdod},\cite{falconer} for more general concepts and further details. 

\begin{definition}[Hausdorff dimension]
For a set $A\subseteq \mathbb{R}^{k}$ and any $\delta>0$ we 
define a $\delta$-cover of $A$ as a countable collection
of sets $B_{1},B_{2},\ldots$ such that each set $B_{i}\subseteq \mathbb{R}^{k}$ has diameter at most $\delta$
and the union of the $B_{i}$ contains $A$. For any $\delta$-cover and $s\geq 0$ consider  
the sum of $\rm{diam}(B_{i})^{s}$ over $i\geq 1$, 
and denote the infimum of these values over all $\delta$-coverings of $A$
by $I^{s}_{\delta}$. The limit of $I^{s}_{\delta}$ as $\delta\to 0$ exists as an element of $[0,\infty]$ 
and is called $s$-dimensional Hausdorff measure of $A$. The
supremum of $s\geq 0$ such that the $s$-dimensional Hausdorff measure of $A$ equals $\infty$ is called 
Hausdorff dimension of $A$ (and $0$ if no such $s$ exists, that is $A$ is finite).
\end{definition}

\begin{theorem}[Jarn\'ik] \label{jarnikchen}
For $k\geq 1$ an integer and $\lambda \geq 1/k$, the set 
$\mathscr{H}^{k}_{\lambda}$ has Hausdorff dimension $(k+1)(1+\lambda)^{-1}$.
\end{theorem}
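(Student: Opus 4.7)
The strategy is to establish matching upper and lower bounds for $\dim \mathscr{H}^k_\lambda$. By the countable stability of Hausdorff dimension, together with the translation invariance modulo $1$ of the defining condition, it suffices to control $\mathscr{H}^k_\lambda \cap [0,1]^k$.

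\textbf{Upper bound.} The plan is a Borel--Cantelli type covering argument. Fix $\epsilon>0$. For each integer $Q\geq 1$ one has
\[
\mathscr{H}^k_\lambda \cap [0,1]^k \;\subseteq\; \bigcup_{q\geq Q} E_q(\epsilon), \qquad E_q(\epsilon):=\bigl\{\underline{\zeta}\in [0,1]^k : \max_{1\leq j\leq k}\Vert q\zeta_j\Vert\leq q^{-\lambda+\epsilon}\bigr\},
\]
since arbitrarily large $q$ must solve the approximation inequality. Each $E_q(\epsilon)$ lies in a union of at most $(q+1)^k$ axis-parallel cubes of diameter $\ll q^{-1-\lambda+\epsilon}$, one centered at each rational $(p_1/q,\ldots,p_k/q)$. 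For $s>(k+1)/(1+\lambda-\epsilon)$ the series $\sum_{q\geq Q}(q+1)^k\, q^{-s(1+\lambda-\epsilon)}$ converges, so letting $Q\to\infty$ shows the $s$-dimensional Hausdorff measure vanishes; then letting $\epsilon\to 0$ yields $\dim \mathscr{H}^k_\lambda \leq (k+1)/(1+\lambda)$.

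\textbf{Lower bound.} Here I would invoke the Mass Transference Principle of Beresnevich and Velani. Consider the balls $B_{\underline{p},q}:=B((p_1/q,\ldots,p_k/q),\, q^{-1-\lambda})$ with $0\leq p_j\leq q$. Up to harmless $\epsilon$-adjustments, $\mathscr{H}^k_\lambda \cap [0,1]^k$ coincides with the associated $\limsup$ set. Dirichlet's theorem asserts that the enlarged balls of radius $q^{-1-1/k}$ form a $\limsup$ set of full $k$-dimensional Lebesgue measure in $[0,1]^k$. The exponent $s=(k+1)/(1+\lambda)$ is the unique value satisfying $(q^{-1-\lambda})^{s/k}=q^{-1-1/k}$, and the Mass Transference Principle precisely converts this full Lebesgue measure statement into $\mathcal{H}^s(\mathscr{H}^k_\lambda\cap [0,1]^k)=\infty$, hence $\dim \mathscr{H}^k_\lambda \geq (k+1)/(1+\lambda)$.

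The main obstacle is the lower bound. Without the Mass Transference Principle at one's disposal, one must follow Jarn\'ik's original route: construct an explicit Cantor-type subset of $\mathscr{H}^k_\lambda$ whose natural mass distribution, via a Frostman-type lemma, certifies the desired dimension. This inductive construction requires careful counting of admissible rationals $(\underline{p},q)$ inside prescribed small boxes and uniform separation of the retained approximations at each stage, and it is there that the genuine technical work lies.
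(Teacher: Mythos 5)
The paper does not prove this statement at all: it is quoted as a classical theorem of Jarn\'ik \cite{jarnik} and used as a black box, so there is no internal proof to compare against. Your sketch is nevertheless a correct modern route to the result. The upper bound is fine: for fixed $\epsilon>0$ every point of $\mathscr{H}^{k}_{\lambda}\cap[0,1]^{k}$ lies in $E_{q}(\epsilon)$ for infinitely many $q$, each $E_{q}(\epsilon)$ is covered by at most $(q+1)^{k}$ cubes of diameter $\ll q^{-1-\lambda+\epsilon}$, and convergence of $\sum_{q}(q+1)^{k}q^{-s(1+\lambda-\epsilon)}$ for $s>(k+1)/(1+\lambda-\epsilon)$ kills the $s$-dimensional measure; letting $\epsilon\to0$ gives $\dim\mathscr{H}^{k}_{\lambda}\le (k+1)/(1+\lambda)$. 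For the lower bound, the Mass Transference Principle argument is the standard one: the exact-exponent limsup set of the balls $B(\underline{p}/q,q^{-1-\lambda})$ is contained in $\mathscr{H}^{k}_{\lambda}$ (the $\epsilon$ in the paper's definition only weakens the requirement), Dirichlet gives full Lebesgue measure for the blown-up balls of radius $q^{-1-1/k}$, and the scaling exponent $s=(k+1)/(1+\lambda)$ is exactly the one matching $(q^{-1-\lambda})^{s/k}=q^{-1-1/k}$, so $\mathcal{H}^{s}$ of the limsup set is infinite and the dimension bound follows (the boundary case $\lambda=1/k$, where $s=k$, is trivial since the set then has full measure). The trade-off you correctly identify is real: your route outsources the hard direction to Beresnevich--Velani's deep theorem, which postdates Jarn\'ik by seventy-five years, whereas Jarn\'ik's original proof (the one the paper implicitly relies on through its citation) builds a Cantor-type set with a Frostman measure by hand; the MTP route is shorter and extends painlessly to general dimension functions, the classical route is self-contained. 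As a proof sketch your proposal is sound; a fully rigorous write-up would either cite the Mass Transference Principle precisely or carry out the Cantor construction you only gesture at in the final paragraph.
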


We will be interested in the Hausdorff dimension of $M\cap \mathscr{H}^{k}_{\lambda}$
for manifolds $M$ defined by certain polynomials. The most general form of $M$ we will
consider is
\[
M=\left\{(x_{1},x_{2},\ldots,x_{s},P_{1}(x_{1},\ldots,x_{s}),\ldots,P_{r}(x_{1},\ldots,x_{s}))\in{\mathbb{R}^{k}}:
(x_{1},\ldots,x_{s})\in{\mathbb{R}^{s}}\right\}
\]
where $P_{j}$ are polynomials with rational coefficients. The metric theory of rational
approximation to certain manifolds of this type has been investigated by Budarina,
Dickinson and Levesley in~\cite{bu}, Budarina and Dickinson in~\cite{budi}
and recently the author~\cite{schlei2}. We will
quote some results from these papers in Section~\ref{schle}.
In this paper the focus is on the special case where $M$ is given in separate variables,
by which we mean each $P_{j}$ depends on only one variable. This restriction enables us to
carry out similar methods as in~\cite{schlei2}. We will obtain comparable, more general results.
However, apart from special cases, in this more general context we will not be able to determine 
the exact dimension of $M\cap \mathscr{H}^{k}_{\lambda}$ but only lower and upper bounds. 

We need projections at some places. 

\begin{definition}
For $s\geq 1$ and $r\geq 0$ integers let $\Pi_{s}: \mathbb{R}^{r+s}\to \mathbb{R}^{s}$  denote
the projection on the first $s$ coordinates, that is $\Pi_{s}(x_{1},\ldots,x_{r+s})=(x_{1},\ldots,x_{s})$.
\end{definition}

Recall that the image of a measurable set under Lipschitz maps, like projections (which are even contractions), 
of a set have at most the Hausdorff dimension of the original set. Hence, if a map and its inverse are both Lipschitz,
then it preserves dimensions.

\subsection{Results from~\cite{schlei2}} \label{schle}

We will apply the main results from~\cite{schlei2}, where the manifold is a curve defined by
polynomials in one variable.

\begin{theorem}[Schleischitz] \label{jox}
Let $k\geq 1$ be an integer and $\mathscr{C}$ be a curve given as 
\begin{equation} \label{eq:curve}
\mathscr{C}=\{(X,P_{2}(X),\ldots,P_{k}(X)): \; X\in{\mathbb{R}}\}, \qquad P_{j}\in{\mathbb{Q}[X]}, 
\end{equation}
where $P_{1}(X)=X$.
Assume the degree of $P_{j}$ is $d_{j}$ and they are labeled such that $1=d_{1}\leq d_{2}\leq \ldots\leq d_{k}$
and $d_{k}\geq 2$.
Let $t:=\max_{1\leq j\leq k-1} \{d_{j+1}-d_{j}\}\geq 1$ the diameter of $\mathscr{C}$. 
Then for any parameter $\lambda>t$ we have 
\begin{equation} \label{eq:neue}
\Pi_{1}(\mathscr{C}\cap \mathscr{H}^{k}_{\lambda})= \mathscr{H}^{1}_{d_{k}\lambda+d_{k}-1}. 
\end{equation}
\end{theorem}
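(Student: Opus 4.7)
I would prove the two inclusions of (\ref{eq:neue}) separately, writing $\mu := d_k\lambda + d_k - 1$.

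The inclusion $\mathscr{H}^1_\mu \subseteq \Pi_1(\mathscr{C}\cap \mathscr{H}^k_\lambda)$ is constructive and requires no hypothesis beyond $\lambda > 0$. Given $X$ and arbitrarily large $q_0$ with $|q_0 X - p_0| \leq q_0^{-\mu+\epsilon}$, I would let $D$ be a positive integer clearing the denominators of all coefficients of the $P_j$'s and set $Q := D q_0^{d_k}$. Then $QP_j(p_0/q_0)$ is an integer for each $j$, and the mean value theorem gives $|QP_j(X) - QP_j(p_0/q_0)| \leq C Q |X - p_0/q_0| \leq C q_0^{d_k - \mu - 1 + \epsilon}$. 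Substituting $\mu = d_k\lambda + d_k - 1$ and using $Q \asymp q_0^{d_k}$ turns the bound into $\leq Q^{-\lambda + \epsilon'}$, placing $(X, P_2(X), \ldots, P_k(X))$ in $\mathscr{H}^k_\lambda$.

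For the reverse inclusion $\Pi_1(\mathscr{C}\cap \mathscr{H}^k_\lambda) \subseteq \mathscr{H}^1_\mu$ (where the hypothesis $\lambda > t$ is essential): assume infinitely many $q$ and integers $p_j$ with $|qP_j(X) - p_j| \leq q^{-\lambda+\epsilon}$. The strategy is to establish the arithmetic divisibility $q^{d_k - 1} \mid p_1^{d_k}$. Once in hand, writing $g := \gcd(q, p_1)$, $q = g\alpha$, $p_1 = g\beta$ with $\gcd(\alpha, \beta) = 1$, the divisibility forces $\alpha^{d_k - 1} \mid g$, whence $q = \alpha^{d_k}\gamma$ for some positive integer $\gamma$ and $\alpha \leq q^{1/d_k}$. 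Dividing $|qX - p_1| \leq q^{-\lambda+\epsilon}$ by $\alpha^{d_k - 1}\gamma$ then gives $|\alpha X - \beta| \leq \alpha q^{-\lambda - 1 + \epsilon} \leq \alpha^{-\mu + \epsilon'}$, the required approximation; assuming $X$ is irrational (else the statement is vacuous), $\alpha$ runs to infinity along the subsequence.

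The divisibility is extracted by reducing to the Veronese case. Step 1 would inductively establish $\|qX^i\| \leq Cq^{-\lambda+\epsilon}$ for every $i = 1, \ldots, d_k$, using that each $P_j$ is a rational linear combination of $1, X, \ldots, X^{d_j}$ and that consecutive degree gaps satisfy $d_{j+1} - d_j \leq t$. Step 2 combines the near-identities $(qX^i)(qX^j) = q(qX^{i+j})$ for $i+j \leq d_k$: the integer combination $p_i^\star p_j^\star - q p_{i+j}^\star$ is of order $O(q^{1-\lambda+\epsilon})$, hence $o(1)$ for $\lambda > 1$, so it must vanish. Iterating from $i = j = 1$ yields $p_i^\star = p_1^i/q^{i-1}$, and for $i = d_k$ this gives $p_1^{d_k} = q^{d_k - 1} p_{d_k}^\star$, as required.

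The main obstacle will be Step 1, the promotion of the raw data to approximations of \emph{all} powers of $X$. The difficulty is that when the degree sequence $(d_1, \ldots, d_k)$ has gaps, one must manufacture integer approximants for $qX^l$ at the original scale $q$ even when $X^l$ does not appear among the $P_j$'s. The hypothesis $\lambda > t$ is precisely what makes this possible: for any $l \leq t$ the identity $(qX)^l = q^{l-1}(qX^l)$ combined with the estimate $|(qX)^l - p_1^l| = O(q^{l-1-\lambda+\epsilon})$ shows that $qX^l$ differs from the rational $p_1^l/q^{l-1}$ by $O(q^{-\lambda+\epsilon})$. A careful analysis, using the divisibility relations forced inductively by the hypotheses on the other $P_j$'s, then shows that this rational is within the same tolerance of an integer, delivering $\|qX^l\| \leq Cq^{-\lambda+\epsilon}$.
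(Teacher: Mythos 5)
Your skeleton is the right one, and it matches how the result is actually obtained: the inclusion $\mathscr{H}^{1}_{\mu}\subseteq \Pi_{1}(\mathscr{C}\cap\mathscr{H}^{k}_{\lambda})$ by clearing denominators with $Q=Dq_{0}^{d_{k}}$ is essentially \cite[Lemma~1.2]{schlei2} (generalized here as Lemma~\ref{dadlemmane}), and your conversion of the divisibility ``$q$ is essentially a $d_{k}$-th power'' into the exponent $\mu=d_{k}\lambda+d_{k}-1$ for the reduced denominator $\alpha$ is exactly the computation the paper carries out after invoking Lemma~\ref{lemma2} (compare the proof of Theorem~\ref{oesterreich}). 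The genuine gap is that the divisibility itself is the whole content of the key lemma (Lemma~\ref{lemma2}, quoted from \cite[Lemma~3.3]{schlei2}), and your route to it does not supply a proof. Your Step~1 is already as strong as the divisibility you are after: writing $q=g\alpha$, $p_{1}=g\beta$ with $(\alpha,\beta)=1$, the rational $p_{1}^{l}/q^{l-1}=g\beta^{l}/\alpha^{l-1}$ lies within $o(1)$ of an integer only if $\alpha^{l-1}\mid g$, i.e.\ $\alpha^{l}\mid q$; so proving ``this rational is within the same tolerance of an integer'' for $l=d_{k}$ \emph{is} the conclusion $\alpha^{d_{k}}\mid q$, not a stepping stone to it, and your Step~2 (the multiplicative identities $p_{i}^{\star}p_{j}^{\star}=qp_{i+j}^{\star}$) then merely re-derives what Step~1 must already contain. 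The actual mechanism --- an induction gaining one power of $\alpha$ at a time, in which the hypothesis on the next polynomial $P_{j+1}$ leaves a rational with denominator dividing $\alpha^{d_{j+1}-l}\leq\alpha^{t}\leq q^{t}$ within $O(q^{-\lambda+\epsilon})$ of an integer, so that $\lambda>t$ (together with a small enough constant, cf.\ $C_{0}$ in Lemma~\ref{lemma2}) forces it to \emph{be} an integer, after which reduction modulo $\alpha$, the coprimality $(\alpha,\beta)=1$ and the leading coefficient give the next divisibility --- is exactly what you defer to ``a careful analysis''; your displayed identity anchored at $p_{1}^{l}/q^{l-1}$, restricted moreover to $l\leq t$ although powers up to $d_{k}\gg t$ must be handled, never makes the hypotheses on $P_{2},\ldots,P_{k}$ or the bound $t$ actually operate.

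A second, smaller defect: the theorem allows arbitrary $P_{j}\in\mathbb{Q}[X]$, and Step~1 is false without control of the leading coefficients. For $P_{2}(X)=2X^{2}$, taking $q=2m^{2}$ and $X=u/(2m)$ with $u$ odd gives $\Vert qX\Vert=\Vert 2qX^{2}\Vert=0$ while $\Vert qX^{2}\Vert=1/2$, so the per-$q$ implication ``data for the $P_{j}$ at $q$ $\Rightarrow$ $\Vert qX^{i}\Vert\leq Cq^{-\lambda+\epsilon}$ for all $i$'' cannot hold in general (and a Liouville-type limit of such rationals shows the obstruction survives for irrational $X$). This is precisely why Lemma~\ref{lemma2} works with $\Delta$ and $x_{1}=x_{0}/(x_{0},\Delta)$ rather than with $x_{0}$, and why your exact identities such as $p_{1}^{d_{k}}=q^{d_{k}-1}p_{d_{k}}^{\star}$ must be replaced by statements up to bounded factors depending on the coefficients (harmless for the exponent, but it has to be tracked). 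In short: your easy inclusion and your final deduction from the divisibility are correct, but the arithmetic core --- a proof of Lemma~\ref{lemma2} --- is missing.
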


Theorem~\ref{jox} in combination with Jarn\'ik~Theorem~\ref{jarnikchen}
led to the value $2d_{k}^{-1}/(1+\lambda)$ for the dimension 
of $\mathscr{C}\cap \mathscr{H}^{k}_{\lambda}$ for $\lambda>t$.
Essentially the same formula was established in~\cite{bu} for the (possibly) smaller range of 
parameters $\lambda\in{[d_{k}-1,\infty]}$. 
The results in~\cite{bu} were on the other hand slightly more general in the sense of permitting more
general dimension functions. However, our results in both~\cite{schlei2} and the present paper can be
generalized to this more general context rather straightforward. See also~\cite{schlei}
for the special case of the Veronese curve where the proof is less technical. It is worth
noting that the inclusion 
$\Pi_{1}(\mathscr{C}\cap \mathscr{H}^{k}_{\lambda})\supseteq \mathscr{H}^{1}_{d_{k}\lambda+d_{k}-1}$
holds for all parameters $\lambda\geq 1/k$, which is~\cite[Lemma~1.2]{schlei2}. We will generalize this
result in~Lemma~\ref{dadlemmane}. 

At places we will need to apply the following Lemma~\ref{lemma2}, which is~\cite[Lemma~3.3]{schlei2}, directly.
It was the key observation for the proof of Theorem~\ref{jox}.
As in~\cite{schlei2}, to avoid heavy notation in the formulation of Lemma~\ref{lemma2}, 
we prepone some definitions. 
For $\mathscr{C}$ as in \eqref{eq:curve} with polynomials $P_{j}\in{\mathbb{Z}[X]}$ of non-decreasing degrees 
$d_{j}$ as in Theorem~\ref{jox}, write
\begin{equation} \label{eq:polynome}
P_{j}(X)=c_{0,j}+c_{1,j}X+\cdots+c_{d_{j},j}X^{d_{j}}, \qquad c_{.,j}\in{\mathbb{Z}}, \quad 1\leq j\leq k.
\end{equation}
Moreover, for $\zeta\in{\mathbb{R}}$ we define
\begin{equation} \label{eq:stock}
\Delta=\Delta(\mathscr{C}):=\prod_{1\leq j\leq k} \vert c_{d_{j},j}\vert,  
\quad D=D(\mathscr{C}):=\Delta^{d_{k}}, \quad 
\Sigma(\mathscr{C},\zeta):=\max_{1\leq j\leq k}\max_{\vert z-\zeta\vert\leq 1/2} \vert P_{j}^{\prime}(z)\vert.
\end{equation}
Furthermore, for $x_{0}$ an integer variable that will appear in the lemma and $\Delta$ in \eqref{eq:stock}, let
\begin{equation} \label{eq:xeins}
x_{1}:=\frac{x_{0}}{(x_{0},\Delta)},
\end{equation}
where $(.,.)$ denotes the greatest common divisor.

\begin{lemma}[Schleischitz] \label{lemma2}
Let $\mathscr{C}$ be a curve as in \eqref{eq:curve} with $P_{j}\in{\mathbb{Z}[X]}$ 
as in \eqref{eq:polynome} of type 
$\underline{d}=(d_{1},\ldots,d_{k})$ and diameter $t\geq 1$. 
Further let $\zeta\in{\mathbb{R}}$ be arbitrary. For an integer $x$ denote by $y$ the 
closest integer to $\zeta x$ and write $y/x=y_{0}/x_{0}$ for integers $(x_{0},y_{0})=1$.

There exists a constant $C=C(\mathscr{C},\zeta)>0$ such that for any integer $x>0$ the estimate 
\begin{equation} \label{eq:hain}
\max_{1\leq j\leq k}\Vert P_{j}(\zeta)x\Vert < C\cdot x^{-t}
\end{equation}
implies $x_{1}^{d_{k}}$ divides $x$, where $x_{1}$ is defined 
via \eqref{eq:stock}, \eqref{eq:xeins} for $x_{0}$ as above.
A suitable choice for $C$ is given by 
\[
C=C_{0}:=\frac{1}{2D\cdot \Sigma(\mathscr{C},\zeta)},
\]
with $D=D(\mathscr{C})$ and $\Sigma(\mathscr{C},\zeta)$ from \eqref{eq:stock}.
\end{lemma}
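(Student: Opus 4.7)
The plan is to prove by induction on $j = 1, \dots, k$ that $P_j(y/x)\cdot x$ is in fact an integer, and then at $j = k$ extract the desired divisibility $x_1^{d_k} \mid x$ by a prime-by-prime analysis. As a first preparatory step I translate the approximation hypothesis from $\zeta$ to the rational $y/x$: letting $z_j$ denote the integer nearest to $P_j(\zeta) x$, the assumption $\|P_j(\zeta) x\| < C_0 x^{-t}$, together with the mean-value bound $|P_j(y/x) - P_j(\zeta)| \leq \Sigma(\mathscr{C},\zeta)\, |y/x - \zeta|$ (valid since $|y/x - \zeta| \leq C_0 x^{-t-1} \leq 1/2$), gives
\[
\left| P_j(y/x)\cdot x - z_j \right| < (1 + \Sigma) C_0 x^{-t} \leq 2\Sigma C_0 x^{-t} = \frac{1}{D x^t}.
\]

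Writing $y/x = y_0/x_0$ in lowest terms and $m = x/x_0$, one has $P_j(y/x)\cdot x = N_j m / x_0^{d_j - 1}$ with $N_j = \sum_{i=0}^{d_j} c_{i,j} y_0^i x_0^{d_j - i} \in \mathbb{Z}$. For any prime $p \mid x_0$ with $v_p(c_{d_j, j}) < v_p(x_0)$ (which holds in particular whenever $p \mid x_1$), the congruence $N_j \equiv c_{d_j, j} y_0^{d_j} \pmod{x_0}$ together with $\gcd(x_0, y_0) = 1$ yields $v_p(N_j) = v_p(c_{d_j, j})$. The heart of the argument is the observation that if the denominator $b_j$ of $N_j m/x_0^{d_j - 1}$ in lowest terms satisfies $b_j \leq D x^t$, then the displayed bound forces this rational to equal the integer $z_j$, i.e.\ $x_0^{d_j - 1} \mid N_j m$.

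To prove $b_j \leq D x^t$ (and hence $x_0^{d_j - 1} \mid N_j m$) for every $j$ I induct on $j$; the base $j = 1$ is trivial since $d_1 = 1$. Assuming $x_0^{d_{j-1} - 1} \mid N_{j-1} m$, I extract for each prime $p$ with $v_p(x_1) > 0$ the lower bound $v_p(m) \geq (d_{j-1} - 1)v_p(x_0) - v_p(c_{d_{j-1}, j-1})$, which in turn yields $v_p(b_j) \leq t\, v_p(x_0) + v_p(c_{d_{j-1}, j-1})$ for these "good" primes; for the remaining primes $p \mid x_0$ (where $v_p(x_0) \leq v_p(\Delta)$) I use the crude bound $v_p(b_j) \leq (d_j - 1) v_p(x_0)$. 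Taking the product over all primes and estimating each sub-product separately gives $b_j \leq x^t \cdot |c_{d_{j-1}, j-1}| \cdot \Delta^{d_j - 1} \leq x^t \Delta^{d_j} \leq D x^t$, closing the induction.

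Applying the induction at $j = k$ yields $x_0^{d_k - 1} \mid N_k m$, and for any prime $p$ with $v_p(x_1) > 0$ the identity $v_p(N_k) = v_p(c_{d_k, k})$ then gives
\[
v_p(x) = v_p(x_0) + v_p(m) \geq d_k\, v_p(x_0) - v_p(c_{d_k, k}) \geq d_k\bigl(v_p(x_0) - v_p(\Delta)\bigr) = d_k\, v_p(x_1),
\]
which is exactly the claim $x_1^{d_k} \mid x$. The main obstacle I anticipate is the delicate $p$-adic bookkeeping in the inductive bound for $b_j$: one must carefully separate the primes $p \mid x_1$, handled by the "generic" identity $v_p(N_j) = v_p(c_{d_j, j})$, from the remaining primes $p \mid x_0$ with $v_p(x_0) \leq v_p(\Delta)$, handled only by the crude trivial bound. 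It is precisely this two-regime split, combined with $d_j \leq d_k$, that pins down the exponent $d_k$ in $D = \Delta^{d_k}$.
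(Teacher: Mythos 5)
Your proof is correct: the transfer from $\zeta$ to $y_{0}/x_{0}$ via the mean value bound with $\Sigma(\mathscr{C},\zeta)$, the identity $P_{j}(y_{0}/x_{0})x=N_{j}m/x_{0}^{d_{j}-1}$ with $v_{p}(N_{j})=v_{p}(c_{d_{j},j})$ for primes $p\mid x_{1}$, the induction over consecutive indices exploiting $d_{j}-d_{j-1}\leq t$, and the two-regime split of primes according to whether $v_{p}(x_{0})>v_{p}(\Delta)$ all check out, and the constants close correctly because $\Sigma\geq 1$ (since $P_{1}'\equiv 1$) and $|c_{d_{j-1},j-1}|\,\Delta^{d_{j}-1}\leq \Delta^{d_{k}}=D$, so the denominator bound $b_{j}\leq Dx^{t}$ indeed forces integrality at every step. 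Note that the paper itself does not prove this statement but imports it from \cite[Lemma~3.3]{schlei2}; your argument is essentially the same denominator/divisibility induction used there, so there is nothing further to reconcile.
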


The conditions $y_{0}/x_{0}=y/x$ and $x_{0}^{d_{k}}\vert x$ imply we may write
$(x,y)=(Mx_{0}^{d_{k}},Mx_{0}^{d_{k}-1}y_{0})$ for some integer $M$. This will be used the proofs in 
Section~\ref{beweischen}.
Observe also that in the case of monic polynomials $P_{j}$ we have $D=\Delta=1$ and $x_{0}=x_{1}$.

\section{New results}

\subsection{Lower bounds for Hausdorff dimensions}

The following is an extension of both~\cite[Lemma~1]{bug} and~\cite[Lemma~1.2]{schlei2} 
to the case of polynomials in arbitrarily many variables. The proof in Section~\ref{beweischen} uses a similar method.

\begin{lemma} \label{dadlemmane}
Let $s\geq 1$ be an integer and $P_{1},\ldots,P_{r}$ be polynomials $\mathbb{R}^{s}\to \mathbb{R}$ 
with rational coefficients.
Assume the maximum total degree among the $P_{j}$ is $d$ and let $k=s+r$.
Consider the manifold
\begin{equation} \label{eq:maddam}
M=\left\{(x_{1},x_{2},\ldots,x_{s},P_{1}(x_{1},\ldots,x_{s}),\ldots,P_{r}(x_{1},\ldots,x_{s}))\in{\mathbb{R}^{k}}:
(x_{1},\ldots,x_{s})\in{\mathbb{R}^{s}}\right\}.
\end{equation}
Then for all parameters $\lambda\geq 1/k$ we have 
\begin{equation} \label{eq:huigut}
\Pi_{s}(\mathscr{H}^{k}_{\lambda}\cap M) \supseteq  \mathscr{H}^{s}_{d\lambda+d-1}.
\end{equation}
\end{lemma}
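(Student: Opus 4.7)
The plan is to extend the proof of \cite[Lemma~1.2]{schlei2} from one variable to the multivariate setting via essentially the same mechanism, tracked over multi-indices. Given $\underline{\xi} = (\xi_{1},\ldots,\xi_{s}) \in \mathscr{H}^{s}_{d\lambda + d - 1}$, I want to show that the image point $\underline{\zeta} := (\xi_{1},\ldots,\xi_{s}, P_{1}(\underline{\xi}),\ldots,P_{r}(\underline{\xi}))$ belongs to $\mathscr{H}^{k}_{\lambda}$; then $\underline{\xi} = \Pi_{s}(\underline{\zeta})$ witnesses $\underline{\xi}$ as an element of the left-hand side of~\eqref{eq:huigut}. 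First I clear denominators by multiplying by a common denominator $N$ of the rational coefficients of the $P_{j}$, so that without loss of generality $P_{j} \in \mathbb{Z}[X_{1},\ldots,X_{s}]$ for all $j$. The factor $N$ will be absorbed by taking $Q := N q^{d}$ rather than $q^{d}$ as the approximating denominator at scale $q$.

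Next, fix $\epsilon > 0$ and choose $q$ arbitrarily large with $q \xi_{j} = p_{j} + \eta_{j}$, $p_{j} \in \mathbb{Z}$, $|\eta_{j}| \leq q^{-(d\lambda + d - 1) + \epsilon}$ for $1 \leq j \leq s$. For a monomial $c_{\underline{a}} \xi_{1}^{a_{1}} \cdots \xi_{s}^{a_{s}}$ of some $P_{l}$ with $|\underline{a}| := a_{1} + \cdots + a_{s} \leq d$, I rewrite
\[
Q \cdot c_{\underline{a}} \xi_{1}^{a_{1}} \cdots \xi_{s}^{a_{s}} \;=\; N c_{\underline{a}} \, q^{d - |\underline{a}|} \prod_{j=1}^{s} (p_{j} + \eta_{j})^{a_{j}}.
\]
Expanding the product, the principal term $N c_{\underline{a}} q^{d - |\underline{a}|} \prod_{j} p_{j}^{a_{j}}$ is an honest integer (thanks to $|\underline{a}| \leq d$ and $N c_{\underline{a}} \in \mathbb{Z}$). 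Every remaining term in the expansion contains at least one factor of some $\eta_{j}$; using $|p_{j}| \leq q|\xi_{j}| + 1 = O(q)$, each such term has size $O(q^{|\underline{a}| - 1} \max_{j}|\eta_{j}|)$, so after the prefactor $|N c_{\underline{a}}| q^{d - |\underline{a}|}$ it is bounded by $O(q^{d-1} \max_{j}|\eta_{j}|) = O(q^{-d\lambda + \epsilon})$. Summing over the finitely many monomials of $P_{l}$ and over $1 \leq l \leq r$ gives $\Vert Q P_{l}(\underline{\xi}) \Vert = O(q^{-d\lambda + \epsilon})$. The estimate $\Vert Q \xi_{j} \Vert \leq N q^{d-1} |\eta_{j}| = O(q^{-d\lambda + \epsilon})$ for the first $s$ coordinates is analogous and easier.

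Finally, from $Q = N q^{d}$ one has $q^{-d\lambda + \epsilon} = O(Q^{-\lambda + \epsilon/d})$, with the multiplicative constants absorbed into the exponent for $Q$ large. Since $\epsilon > 0$ was arbitrary and $Q \to \infty$ as $q \to \infty$, arbitrarily large $Q$ satisfy $\max_{1 \leq l \leq k} \Vert Q \zeta_{l} \Vert \leq Q^{-\lambda + \epsilon''}$ for any prescribed $\epsilon'' > 0$. Hence $\underline{\zeta} \in \mathscr{H}^{k}_{\lambda}$ as required. The main obstacle is purely bookkeeping: isolating the integer principal part of $\prod_{j}(p_{j} + \eta_{j})^{a_{j}}$ and controlling the residual error uniformly over multi-indices $\underline{a}$ with $|\underline{a}| \leq d$. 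No new idea beyond the one-variable argument of~\cite[Lemma~1.2]{schlei2} is needed; the multivariate generalization is essentially formal once multi-index notation is in place.
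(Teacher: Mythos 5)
Your proposal is correct and follows essentially the same route as the paper: lift a point of $\mathscr{H}^{s}_{d\lambda+d-1}$ to $M$, use $q^{d}$ (in your case $Nq^{d}$) as the new denominator, and expand each monomial around the integer approximations $p_{j}$ of $q\xi_{j}$ so that the principal term is an integer and every residual term is $O(q^{d-1}\max_{j}|\eta_{j}|)=O(q^{-d\lambda+\epsilon})$. Your explicit factor $N$ in $Q=Nq^{d}$ is just a slightly more careful version of the paper's ``we may assume the coefficients are integral,'' and the rest differs only in bookkeeping (direct multinomial expansion of $\prod_{j}(p_{j}+\eta_{j})^{a_{j}}$ versus the paper's $X^{n}-Y^{n}$ factorization).
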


With this result the dimension can be bounded with Jarn\'ik Theorem~\ref{jarnikchen}.

\begin{corollary} \label{dassaddas}
For any $M$ is in Lemma~\ref{dadlemmane} and any $\lambda\geq 1/s$ we have 
\begin{equation} \label{eq:dasser}
\dim(\mathscr{H}^{k}_{\lambda}\cap M) \geq \frac{s+1}{d(\lambda+1)}.
\end{equation}
\end{corollary}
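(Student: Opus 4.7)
The plan is to chain together three observations: Lemma~\ref{dadlemmane}, the Lipschitz property of projections, and Jarn\'ik's Theorem~\ref{jarnikchen}. Since the assumption $\lambda\geq 1/s$ forces $\lambda\geq 1/k$ (because $k=s+r\geq s$), Lemma~\ref{dadlemmane} is applicable and yields the inclusion
\[
\Pi_{s}(\mathscr{H}^{k}_{\lambda}\cap M) \supseteq \mathscr{H}^{s}_{d\lambda+d-1}.
\]

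Next I would invoke the fact, recalled in the excerpt just after the definition of $\Pi_{s}$, that Lipschitz maps cannot increase Hausdorff dimension. The projection $\Pi_{s}$ is $1$-Lipschitz, so
\[
\dim(\mathscr{H}^{k}_{\lambda}\cap M) \geq \dim\bigl(\Pi_{s}(\mathscr{H}^{k}_{\lambda}\cap M)\bigr) \geq \dim(\mathscr{H}^{s}_{d\lambda+d-1}),
\]
where the last inequality uses monotonicity of Hausdorff dimension with respect to set inclusion applied to the conclusion of Lemma~\ref{dadlemmane}.

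Finally, I would apply Jarn\'ik's Theorem~\ref{jarnikchen} in dimension $s$ to the parameter $\mu=d\lambda+d-1$. This requires the admissibility condition $\mu\geq 1/s$, which is immediate: from $\lambda\geq 1/s$ and $d\geq 1$ one gets $d\lambda+d-1\geq d/s+d-1\geq 1/s$. Jarn\'ik then supplies
\[
\dim(\mathscr{H}^{s}_{d\lambda+d-1}) = \frac{s+1}{1+(d\lambda+d-1)} = \frac{s+1}{d(\lambda+1)},
\]
yielding the desired lower bound \eqref{eq:dasser}.

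There is essentially no obstacle here: the content has been packaged into Lemma~\ref{dadlemmane}, and the corollary is a formal consequence. The only point requiring the briefest check is the admissibility of the parameter $d\lambda+d-1$ for Jarn\'ik's theorem and the compatibility of the hypothesis $\lambda\geq 1/s$ with the hypothesis $\lambda\geq 1/k$ demanded by Lemma~\ref{dadlemmane}; both are straightforward.
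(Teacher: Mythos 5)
Your proposal is correct and follows essentially the same route as the paper: combine the inclusion of Lemma~\ref{dadlemmane} with Jarn\'ik's Theorem~\ref{jarnikchen} and the behaviour of Hausdorff dimension under the projection $\Pi_{s}$. You are in fact slightly more explicit than the paper (noting that only the one-sided Lipschitz bound $\dim\Pi_{s}(A)\leq\dim A$ is needed and checking the admissibility $d\lambda+d-1\geq 1/s$), but the argument is the same.
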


\begin{proof}
The right hand side of \eqref{eq:huigut} attains the given dimension by Jarn\'ik Theorem~\ref{jarnikchen} 
and the projection $\Pi_{s}$ does
not affect Hausdorff dimensions since it is locally bi-Lipschitz.
\end{proof}

We emphasize that for $s=1$, where $M$ is a curve in $\mathbb{R}^{r+1}$, and sufficiently large parameters $\lambda$
depending on the degrees of the polynomials $P_{1},\ldots,P_{r}$, there is always equality
in \eqref{eq:huigut} and \eqref{eq:dasser}, by Theorem~\ref{jox}.

\subsection{Upper bounds for Hausdorff dimension} \label{lowbou}

In the general case it cannot be expected to have equality
in \eqref{eq:huigut} and \eqref{eq:dasser} for any large parameter $\lambda$ as for curves.
However, we will find some special cases where there is indeed equality in \eqref{eq:huigut} and \eqref{eq:dasser}
holds for $s>1$ and sufficiently large parameters $\lambda$, see Corollary~\ref{heilcor}. 

Generally, we notice the following upper bounds valid for more general manifolds, which follow rather 
immediately from Jarn\'ik~Theorem~\ref{jarnikchen}.

\begin{lemma} \label{struklemma}
Let
\[
M=\left\{(x_{1},x_{2},\ldots,x_{s},P_{1}(\underline{x}),\ldots,P_{r}(\underline{x}))\in{\mathbb{R}^{r+s}}:
(x_{1},\ldots,x_{s})\in{\mathbb{R}^{s}}\right\}
\]
where $\underline{x}=(x_{1},\ldots,x_{s})$ and $P_{j}:\mathbb{R}^{s}\to \mathbb{R}$ are polynomials
with real coefficients. Then
the dimension of $\mathscr{H}^{r+s}_{\lambda}\cap M$ is bounded above by 
$(s+1)(1+\lambda)^{-1}$ for any $\lambda \geq 1/s$. 
\end{lemma}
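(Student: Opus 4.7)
The plan is to push the problem down from $M \subseteq \mathbb{R}^{r+s}$ to $\mathbb{R}^s$ via the projection $\Pi_s$, and then invoke Jarn\'ik's Theorem~\ref{jarnikchen} directly. The key observation is almost trivial: making the max over \emph{fewer} coordinates can only decrease it.

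First I would verify the inclusion
\[
\Pi_s(\mathscr{H}^{r+s}_{\lambda}) \subseteq \mathscr{H}^{s}_{\lambda}.
\]
Indeed, if $\underline{\zeta}=(\zeta_1,\ldots,\zeta_{r+s}) \in \mathscr{H}^{r+s}_{\lambda}$, then for every $\epsilon>0$ there are arbitrarily large integers $q$ with $\max_{1\leq j\leq r+s}\Vert q\zeta_j\Vert \leq q^{-\lambda+\epsilon}$, whence \emph{a fortiori} $\max_{1\leq j\leq s}\Vert q\zeta_j\Vert \leq q^{-\lambda+\epsilon}$, so $(\zeta_1,\ldots,\zeta_s)\in \mathscr{H}^{s}_{\lambda}$. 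In particular $\Pi_s(\mathscr{H}^{r+s}_{\lambda}\cap M) \subseteq \mathscr{H}^{s}_{\lambda}$.

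Next I would argue that $\Pi_s$ restricted to $M$ is locally bi-Lipschitz, hence preserves Hausdorff dimension. The map $\Pi_s|_M$ is a contraction (projections are), and its inverse is the polynomial parametrization $\underline{x} \mapsto (\underline{x},P_1(\underline{x}),\ldots,P_r(\underline{x}))$, which is Lipschitz on every bounded subset of $\mathbb{R}^s$ since each $P_j$ has bounded gradient on bounded sets. Writing $M = \bigcup_{N\geq 1} (M \cap [-N,N]^{r+s})$ as a countable union of bounded pieces, and using that Hausdorff dimension is stable under countable unions and preserved by bi-Lipschitz maps, I conclude
\[
\dim\bigl(\mathscr{H}^{r+s}_{\lambda}\cap M\bigr) = \dim\bigl(\Pi_s(\mathscr{H}^{r+s}_{\lambda}\cap M)\bigr).
\]

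Finally, I would combine the two steps: the right-hand side above is at most $\dim(\mathscr{H}^{s}_{\lambda})$, which by Theorem~\ref{jarnikchen} equals $(s+1)(1+\lambda)^{-1}$ precisely under the hypothesis $\lambda \geq 1/s$. This yields the claimed upper bound. There is essentially no obstacle here; the only point that requires a little care is the local-versus-global Lipschitz issue for polynomials, which is handled by the exhaustion $M = \bigcup_N M \cap [-N,N]^{r+s}$ together with countable stability of Hausdorff dimension. Note also that no assumption on the $P_j$ beyond being real polynomials is used, which is why the statement is phrased in this more general form than Lemma~\ref{dadlemmane}.
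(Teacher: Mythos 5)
Your proposal is correct and follows essentially the same route as the paper: project onto the first $s$ coordinates, observe that the inverse parametrization $\underline{x}\mapsto(\underline{x},P_{1}(\underline{x}),\ldots,P_{r}(\underline{x}))$ is locally Lipschitz so that $\Pi_{s}$ restricted to $M$ preserves Hausdorff dimension, and then bound the projected set by $\mathscr{H}^{s}_{\lambda}$, whose dimension is $(s+1)(1+\lambda)^{-1}$ by Jarn\'ik's theorem. Your explicit treatment of the local-versus-global Lipschitz issue via the exhaustion $M=\bigcup_{N}\bigl(M\cap[-N,N]^{r+s}\bigr)$ and countable stability merely spells out what the paper leaves implicit.
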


\begin{proof}
Indeed $(s+1)(1+\lambda)^{-1}$ is the correct dimension for the simultaneous $\lambda$-approximable points
of the first $s$ coordinates by Jarn\'ik Theorem~\ref{jarnikchen}. 
On the other hand, the inverse of the projection of the first $s$ coordinates onto $M$
is locally Lipschitz. Indeed, if in some compact set $K\subseteq \mathbb{R}^{s}$ 
we let 
\[
h(K):= \max_{1\leq j\leq r} \max_{1\leq i\leq s}\max_{u\in{K}} \left\vert \frac{dP_{j}}{dx_{i}}(u)\right\vert<\infty
\]
then clearly $\vert \underline{a}-\underline{b}\vert_{k,\infty} \leq h(K)
\cdot \vert \Pi_{s}(\underline{a})-\Pi_{s}(\underline{b})\vert_{s,\infty}$
for all $\underline{a},\underline{b}\in{\mathbb{R}^{r+s}}$ for which $\Pi_{s}(\underline{a}),\Pi_{s}(\underline{b})$
are both in $K$, where $\vert.\vert_{m,\infty}$ 
denotes the supremum norm in $\mathbb{R}^{m}$.
Finally asking for additional conditions for $P_{.}(x_{.})$ to be satisfied 
can only decrease this value.
\end{proof}

More generally, the same argument applies essentially to any locally Lipschitz maps 
$P_{j}:\mathbb{R}^{s}\to \mathbb{R}$.
Hence we are only interested in upper bounds smaller than $(s+1)(1+\lambda)^{-1}$. Note however that there
is equality in Lemma~\ref{struklemma} if the $P_{j}(\underline{x})$ all belong to the $\mathbb{Q}$-span
of $\{1,x_{1},\ldots,x_{s}\}$.

Now for the remainder of the paper we specialize to polynomials in only one variable.
We first display an immediate consequence of Theorem~\ref{jox}.

\begin{theorem} \label{dadthmne}
Let $s\geq 1$ be an integer and $\sigma_{1},\ldots,\sigma_{s}$ be positive integers. 
For any $1\leq i\leq s$ let $P_{i,1},\ldots,P_{i,\sigma_{1}}\in{\mathbb{Z}[X]}$ be a set of $\sigma_{i}$ polynomials 
in one variable of degrees $1=d_{i,1}\leq \ldots\leq d_{i,\sigma_{i}}$ respectively, where
$P_{i,1}(X)=X$ for $1\leq i\leq s$. Let
\[
t_{i}:=\max_{1\leq j\leq \sigma_{i}-1} d_{i,j+1}-d_{i,j}, \qquad 1\leq i\leq s.
\]
Put $k=\sigma_{1}+\cdots+\sigma_{s}$
and $t^{\prime}=\max_{1\leq i\leq s} t_{i}$. 
Let 
\[
M=\left\{(P_{1,1}(x_{1}),\ldots,P_{1,\sigma_{1}}(x_{1}),P_{2,1}(x_{2}),\ldots,P_{s,\sigma_{s}}(x_{s}))
\in{\mathbb{R}^{k}}:
(x_{1},\ldots,x_{s})\in{\mathbb{R}^{s}}\right\}.
\]
Denote $\Pi_{(s)}$ the projection on the coordinates 
$(1,\sigma_{1}+1,\sigma_{1}+\sigma_{2}+1,\ldots,\sigma_{1}+\cdots +\sigma_{s-1}+1)$,
which are the places with the entries $P_{i,1}(x_{i})=x_{i}$ in $M$.

Then for any parameter $\lambda>t^{\prime}$ we have
\[
\Pi_{(s)}(\mathscr{H}^{k}_{\lambda}\cap M)\subseteq 
\mathscr{H}^{1}_{d_{1,\sigma_{1}}\lambda+d_{1,\sigma_{1}}-1}\times 
\mathscr{H}^{1}_{d_{2,\sigma_{2}}\lambda+d_{2,\sigma_{2}}-1}\times 
\cdots\times \mathscr{H}^{1}_{d_{s,\sigma_{s}}\lambda+d_{s,\sigma_{s}}-1}.
\]
\end{theorem}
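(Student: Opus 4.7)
The proof is almost a direct corollary of Theorem~\ref{jox} combined with the Cartesian product structure of $M$. Set $\mathscr{C}_{i} := \{(P_{i,1}(x), \ldots, P_{i,\sigma_{i}}(x)) : x \in \mathbb{R}\} \subseteq \mathbb{R}^{\sigma_{i}}$ for each $1 \leq i \leq s$; under the obvious block decomposition $\mathbb{R}^{k} = \mathbb{R}^{\sigma_{1}} \times \cdots \times \mathbb{R}^{\sigma_{s}}$ we then have $M = \mathscr{C}_{1} \times \cdots \times \mathscr{C}_{s}$. Because the $i$-th block only involves the variable $x_{i}$, the blocks are completely independent.

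Now let $\underline{\zeta} \in M \cap \mathscr{H}^{k}_{\lambda}$ and fix $\epsilon > 0$. By definition there exist arbitrarily large integers $q$ with $\max_{1 \leq j \leq k} \Vert q \zeta_{j}\Vert \leq q^{-\lambda + \epsilon}$. Restricting the maximum to the coordinates of block $i$ gives $\max_{1 \leq j \leq \sigma_{i}} \Vert q P_{i,j}(x_{i})\Vert \leq q^{-\lambda + \epsilon}$ for the same arbitrarily large $q$. Hence for every $i$ the point $(P_{i,1}(x_{i}), \ldots, P_{i,\sigma_{i}}(x_{i}))$ belongs to $\mathscr{C}_{i} \cap \mathscr{H}^{\sigma_{i}}_{\lambda}$.

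Since $\lambda > t' \geq t_{i}$ and the polynomials in block $i$ fulfill the hypotheses of Theorem~\ref{jox} (namely $P_{i,1}(X) = X$ and non-decreasing degrees), the equality \eqref{eq:neue} applied to $\mathscr{C}_{i}$ yields $x_{i} \in \mathscr{H}^{1}_{d_{i,\sigma_{i}}\lambda + d_{i,\sigma_{i}} - 1}$. Running $i$ from $1$ to $s$, the point $\Pi_{(s)}(\underline{\zeta}) = (x_{1}, \ldots, x_{s})$ lies in the asserted Cartesian product. The only edge case is a block with $\sigma_{i} = 1$ (so $d_{i,\sigma_{i}} = 1$), to which Theorem~\ref{jox} does not formally apply; but in that situation $\Vert q x_{i}\Vert \leq q^{-\lambda + \epsilon}$ directly yields $x_{i} \in \mathscr{H}^{1}_{\lambda} = \mathscr{H}^{1}_{d_{i,\sigma_{i}}\lambda + d_{i,\sigma_{i}} - 1}$. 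No real obstacle arises; the heavy lifting was done in Theorem~\ref{jox}, and the product structure here is exploited purely at the combinatorial level of restricting a maximum over $k$ indices to a sub-block.
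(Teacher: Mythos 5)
Your proof is correct and takes essentially the same route as the paper, whose entire proof is precisely this reduction: treat each variable's block of polynomials as a curve $\mathscr{C}_{i}$ of the form \eqref{eq:curve} and apply Theorem~\ref{jox} (only the inclusion ``$\subseteq$'' of \eqref{eq:neue} is needed) to each block separately. Your edge-case remark in fact also disposes of the slightly more general degenerate case $\sigma_{i}\geq 2$ with $d_{i,\sigma_{i}}=1$, where Theorem~\ref{jox} likewise does not formally apply but the coordinate $P_{i,1}(x_{i})=x_{i}$ alone already gives $x_{i}\in{\mathscr{H}^{1}_{\lambda}}$.
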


\begin{proof}
It suffices to look at the set of polynomials in each variable separately and apply Theorem~\ref{jox}.
\end{proof}

Assume that the Hausdorff dimension of direct products of sets $\mathscr{H}^{1}_{.}$ behaves nicely.

\begin{conjecture} \label{kondschektscha}
For any integer $s\geq 1$ and parameters $\eta_{1},\ldots,\eta_{s}$ at least one, we have 
\[
\dim(\mathscr{H}^{1}_{\eta_{1}}\times 
\mathscr{H}^{1}_{\eta_{2}}\times 
\cdots\times \mathscr{H}^{1}_{\eta_{s}})=\sum_{i=1}^{s} \dim(\mathscr{H}^{1}_{\eta_{i}})
= \sum_{i=1}^{s} \frac{2}{1+\eta_{i}}.
\]
\end{conjecture}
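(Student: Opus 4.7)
The plan is to prove the conjecture in two pieces: a lower bound that follows from classical product-dimension results, and an upper bound whose proof is the real difficulty. For the lower bound I would invoke Marstrand's product inequality $\dim(A\times B)\geq \dim(A)+\dim(B)$, valid for any Borel $A,B\subseteq \mathbb{R}^{n}$ (see, e.g., Falconer's \emph{Fractal Geometry}). Iterating this over the factors of $\mathscr{H}^{1}_{\eta_{1}}\times\cdots\times \mathscr{H}^{1}_{\eta_{s}}$ and using Jarn\'ik's Theorem~\ref{jarnikchen} to evaluate $\dim(\mathscr{H}^{1}_{\eta_{i}})=2/(1+\eta_{i})$ yields $\dim(\prod_{i}\mathscr{H}^{1}_{\eta_{i}})\geq \sum_{i}2/(1+\eta_{i})$, which is the conjectured lower bound.

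For the upper bound, the starting point is the natural $\limsup$-covering
\[
\mathscr{H}^{1}_{\eta_{i}}\cap [0,1]\;\subseteq\; \bigcup_{q\geq Q}\bigcup_{0\leq p\leq q} B\!\left(p/q,\,q^{-1-\eta_{i}+\epsilon}\right), \qquad Q\geq 1,
\]
which after taking Cartesian products covers $\prod_{i}\mathscr{H}^{1}_{\eta_{i}}\cap [0,1]^{s}$ by axis-aligned rectangles $\prod_{i} B(p_{i}/q_{i},q_{i}^{-1-\eta_{i}+\epsilon})$ indexed by $(\underline{p},\underline{q})$. A direct computation with cubic subcovers of these rectangles already fails to give the conjectured value once some $\eta_{i}\geq 1$: the dominant contributions come from rectangles with vastly unequal sides, and squeezing them into cubes is too wasteful.

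The central obstacle is thus handling this anisotropy. The route I would try first is to work with an anisotropic Hausdorff content with weight vector $(\alpha_{1},\ldots,\alpha_{s})$ where each $\alpha_{i}>2/(1+\eta_{i})$, assigning to a rectangle of sides $(r_{1},\ldots,r_{s})$ the cost $\prod_{i}r_{i}^{\alpha_{i}}$. For the cover above the total cost factorises as
\[
\sum_{\underline{q}\geq Q}\prod_{i}q_{i}\cdot \prod_{i}q_{i}^{-(1+\eta_{i})\alpha_{i}} = \prod_{i}\sum_{q_{i}\geq Q}q_{i}^{1-(1+\eta_{i})\alpha_{i}}\;\longrightarrow\;0 \quad \text{as}\; Q\to\infty,
\]
since $1-(1+\eta_{i})\alpha_{i}<-1$ by the choice of $\alpha_{i}$. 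One would then need to invoke a slicing or mass-transference argument in the spirit of Beresnevich--Velani to convert this vanishing anisotropic content into a bound on the ordinary $(\sum_{i}\alpha_{i})$-dimensional Hausdorff measure, and then let $\alpha_{i}\downarrow 2/(1+\eta_{i})$. An alternative is to apply the Marstrand--Mattila--Tricot inequality $\dim_{H}(A\times B)\leq \dim_{H}(A)+\dim_{P}(B)$ iteratively after proving that the packing dimension of each factor coincides with its Hausdorff dimension; because $\mathscr{H}^{1}_{\eta}$ is dense in $\mathbb{R}$ and hence has upper box dimension $1$, this demands a non-trivial countable decomposition of $\mathscr{H}^{1}_{\eta}$ into pieces of bounded upper box dimension $2/(1+\eta)$, for instance by stratifying points by the smallest denominator realising the $\eta$-approximation at each dyadic scale. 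Either route is technically demanding, and I regard this conversion from an essentially rectangular/anisotropic structure to standard cubical Hausdorff measure as the main obstacle.
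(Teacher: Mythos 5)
You should first note that the statement you are trying to prove is stated in the paper as a \emph{conjecture}: the author gives no proof, and explicitly records that the sum of the dimensions is only known as a lower bound, that ``usual covering arguments do not seem to be directly applicable'', and that the product inequalities involving packing/modified upper box dimension give nothing because the sets $\mathscr{H}^{1}_{\eta}$ are dense. Your proposal reproduces exactly this state of affairs rather than resolving it. The lower bound via Marstrand's inequality and Jarn\'ik's theorem is correct, but it is the uncontroversial half. For the upper bound you only sketch two possible strategies and you yourself flag the decisive step as an obstacle, so there is a genuine gap: the anisotropic-content computation shows that one particular natural cover has small weighted cost $\prod_{i} r_{i}^{\alpha_{i}}$, but this quantity is not an upper bound for the $(\sum_{i}\alpha_{i})$-dimensional Hausdorff content of the product, and the ``slicing or mass-transference argument'' that would convert one into the other is precisely what is missing; nothing in Beresnevich--Velani style mass transference is known to perform this conversion for such anisotropic rectangle covers in the generality needed here.

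Your alternative route is worse than merely incomplete: it cannot work. To apply $\dim_{H}(A\times B)\leq \dim_{H}(A)+\dim_{P}(B)$ you propose to show $\dim_{P}(\mathscr{H}^{1}_{\eta})=2/(1+\eta)$ by a countable decomposition into pieces of upper box dimension $2/(1+\eta)$. But $\mathscr{H}^{1}_{\eta}$ is a countable intersection of dense open sets (a dense $G_{\delta}$), hence comeager; by the Baire category theorem any countable decomposition of it has a piece whose closure contains an interval, and since upper box dimension is unchanged under closure that piece has upper box dimension $1$. Consequently the modified upper box dimension, and with it the packing dimension, of $\mathscr{H}^{1}_{\eta}$ equals $1$, so this route returns only the trivial bound $\dim_{H}(A)+1$ --- this is exactly the obstruction the paper itself points out. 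In short, your proposal does not prove the conjecture; it proves the known lower bound and restates the open upper bound together with one approach that is unfinished and one that is provably blocked.
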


For products of general measurable sets the sum of the single dimensions is only known to be a lower bound, 
but we would need the reverse inequality. Usual covering arguments do not seem to be directly applicable. Also known relations for product sets to other dimension types
do not imply anything as the sets $\mathscr{H}^{1}_{.}$ are dense (the Hausdorff dimension of $A\times B$ is bounded above by the Hausdorff dimension of 
$A$ plus the upper packing dimension of $B$, see~\cite[p. 115]{mattila}. The upper packing dimension
in turn for subsets of a Euclidean space
coincides with the modified upper box dimension. Unluckily, the latter dimension is $1$ for 
a dense subset of $\mathbb{R}$). 

Provided Conjecture~\ref{kondschektscha} holds,
we can establish an upper bound for the Hausdorff dimensions in Theorem~\ref{dadthmne} which is
of certain interest, at least in many cases.

\begin{corollary}
Continue to use the notation and assumptions of Theorem~\ref{dadthmne}.
If Conjecture~\ref{kondschektscha} is true then for any parameter $\lambda>t^{\prime}$ the estimate
\begin{equation} \label{eq:widerlegung}
\dim(\mathscr{H}^{k}_{\lambda}\cap M)\leq \frac{2}{d_{1,\sigma_{1}}(\lambda+1)}+\cdots+
\frac{2}{d_{s,\sigma_{s}}(\lambda+1)}=\frac{2}{1+\lambda}\sum_{i=1}^{s} \frac{1}{d_{i,\sigma_{i}}}
\end{equation}
holds.
\end{corollary}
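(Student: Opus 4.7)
The plan is to chain together three ingredients: (i) the inclusion provided by Theorem~\ref{dadthmne}, (ii) the fact that the projection $\Pi_{(s)}$ restricted to $M$ is locally bi-Lipschitz, and (iii) the assumed product formula in Conjecture~\ref{kondschektscha}. No genuinely new estimate is needed.

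First I would apply Theorem~\ref{dadthmne} directly to obtain
\[
\Pi_{(s)}(\mathscr{H}^{k}_{\lambda}\cap M)\subseteq
\mathscr{H}^{1}_{d_{1,\sigma_{1}}\lambda+d_{1,\sigma_{1}}-1}\times\cdots\times
\mathscr{H}^{1}_{d_{s,\sigma_{s}}\lambda+d_{s,\sigma_{s}}-1},
\]
valid for $\lambda>t^{\prime}$. Next I would observe that $\Pi_{(s)}$ restricted to $M$ has a locally Lipschitz inverse: the inverse is the parametrization $(x_{1},\ldots,x_{s})\mapsto (P_{1,1}(x_{1}),\ldots,P_{s,\sigma_{s}}(x_{s}))$, and each coordinate is a polynomial, hence locally Lipschitz on every compact set by bounding the partial derivatives. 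This is exactly the reasoning of Lemma~\ref{struklemma}, adapted to the separate-variable setting. Since both $\Pi_{(s)}$ and its inverse are locally Lipschitz, Hausdorff dimension is preserved, so
\[
\dim(\mathscr{H}^{k}_{\lambda}\cap M)=\dim\bigl(\Pi_{(s)}(\mathscr{H}^{k}_{\lambda}\cap M)\bigr).
\]

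Now I would combine this with monotonicity of Hausdorff dimension under inclusion and apply Conjecture~\ref{kondschektscha} to the product on the right hand side. To invoke the conjecture I need each parameter $\eta_{i}:=d_{i,\sigma_{i}}\lambda+d_{i,\sigma_{i}}-1$ to be at least $1$; but since $d_{i,\sigma_{i}}\geq 1$ and $\lambda>t^{\prime}\geq 1$, we have $\eta_{i}\geq \lambda>1$, so the hypothesis is satisfied. The conjecture then gives
\[
\dim\Bigl(\prod_{i=1}^{s}\mathscr{H}^{1}_{\eta_{i}}\Bigr)
=\sum_{i=1}^{s}\frac{2}{1+\eta_{i}}
=\sum_{i=1}^{s}\frac{2}{d_{i,\sigma_{i}}(\lambda+1)}
=\frac{2}{1+\lambda}\sum_{i=1}^{s}\frac{1}{d_{i,\sigma_{i}}},
\]
which is exactly \eqref{eq:widerlegung}.

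There is no serious obstacle; the only point requiring a moment of care is the bi-Lipschitz claim, and this is essentially free because the polynomials $P_{i,j}$ depend each on a single variable $x_{i}$ and the coordinates $x_{i}$ themselves appear in $M$ (they correspond to $P_{i,1}(x_{i})=x_{i}$), so $\Pi_{(s)}|_{M}$ is literally a bijection onto $\mathbb{R}^{s}$ whose inverse is a polynomial map. Everything else is an immediate quotation: Theorem~\ref{dadthmne} supplies the inclusion, Lipschitz invariance transfers dimension, and Conjecture~\ref{kondschektscha} converts the product dimension into a sum.
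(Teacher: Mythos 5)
Your argument is correct and is exactly the reasoning the paper intends (it states the corollary without a separate proof, as an immediate combination of Theorem~\ref{dadthmne}, the locally bi-Lipschitz invariance of Hausdorff dimension under $\Pi_{(s)}|_{M}$ as in Lemma~\ref{struklemma}, and Conjecture~\ref{kondschektscha}). The only marginal caveat is your assertion $t^{\prime}\geq 1$, which requires some $d_{i,\sigma_{i}}\geq 2$; in the fully linear case the hypothesis $\eta_{i}\geq 1$ of the conjecture would need $\lambda\geq 1$, but that degenerate case is uninteresting (the bound is then worse than the trivial one, as the paper itself notes).
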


The quality of the bound obviously heavily depends on the degrees $d_{i,j}$. In case of $d_{i,\sigma_{i}}=1$ for all
$1\leq i\leq s$, that is $d_{i,j}=1$ for all pairs $i,j$,
then the resulting bound $2s/(1+\lambda)$ is worse than the trivial bound $(s+1)/(1+\lambda)$
from Lemma~\ref{struklemma}. However, in Remark~\ref{widerlrem} we will see that for other choices of degrees
the bound is reasonably good. See also Example~\ref{dadex}.

The results from Theorem~\ref{jox} can be extended with a bit more effort to compare the
considered sets with sets defined by {\em simultaneous} approximation properties in $\mathbb{R}^{s}$,
i.e. the sets $\mathscr{H}^{s}_{\lambda}$.
In the special case that the polynomials have equal degrees,
we will indeed establish equality in \eqref{eq:huigut} and \eqref{eq:dasser}. 
For the convenience of the reader
first we return to the case of only one polynomial in each variable (apart from the identity),
where the formulation and the proof are a bit less technical. Then we will present the more general version.  
We will directly apply Lemma~\ref{lemma2} instead of Theorem~\ref{jox}.
For the formulation of the next theorem
it will be convenient to deviate slightly from the notation of Theorem~\ref{dadthmne}
by rearranging the polynomials.

\begin{theorem} \label{oesterreich}
Let $s\geq 1$ be an integer and $k=2s$. For $1\leq i\leq s$ let $P_{i}(X)\in{\mathbb{Z}[X]}$ be any polynomial 
with integral coefficients in one variable of degree $d_{i}$. Let $d:=\min_{1\leq i\leq s} d_{i}$
and $d^{\prime}:= \max_{1\leq i\leq s} d_{i}$. Define
\[
M=\left\{(x_{1},x_{2},\ldots,x_{s},P_{1}(x_{1}),\ldots,P_{s}(x_{s}))\in{\mathbb{R}^{k}}:
(x_{1},\ldots,x_{s})\in{\mathbb{R}^{s}}\right\}.
\]
Then for any parameter $\lambda>\max\{d^{\prime}-1,1\}$ we have
\begin{equation} \label{eq:huigutt}
\Pi_{s}(\mathscr{H}^{k}_{\lambda}\cap M)\subseteq \mathscr{H}^{s}_{d\lambda+d-1}.
\end{equation}
In particular for $\lambda>\max\{d^{\prime}-1,1\}$ we have
\begin{equation} \label{eq:widerlegung2}
\dim(\mathscr{H}^{k}_{\lambda}\cap M)\leq \frac{s+1}{d(\lambda+1)}.
\end{equation}
\end{theorem}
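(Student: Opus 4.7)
The plan is to apply Lemma~\ref{lemma2} separately to each of the $s$ sub-curves $\mathscr{C}_{i}:=\{(X,P_{i}(X)):X\in\mathbb{R}\}$ and then combine the resulting $s$ divisibility conclusions into a single approximating integer $Q$ via a least common multiple. Take any $\underline{\zeta}^{*}=(\zeta_{1},\ldots,\zeta_{s},P_{1}(\zeta_{1}),\ldots,P_{s}(\zeta_{s}))\in M\cap\mathscr{H}^{k}_{\lambda}$ and fix a small $\epsilon>0$. By definition of $\mathscr{H}^{k}_{\lambda}$ there are arbitrarily large integers $q$ with $\max_{j}\|q\zeta^{*}_{j}\|\le q^{-\lambda+\epsilon}$; let $y_{i}$ be the integer nearest to $q\zeta_{i}$ and write $y_{i}/q=y_{0,i}/x_{0,i}$ in lowest terms. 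If $d=1$, the choice $Q:=q$ already yields $\max_{i}\|Q\zeta_{i}\|\le Q^{-\lambda+\epsilon}$, so \eqref{eq:huigutt} holds trivially; assume henceforth $d\ge 2$, so that all $d_{i}\ge 2$. The hypothesis $\lambda>d'-1\ge d_{i}-1=t_{i}$ then guarantees that, for large $q$, the bound \eqref{eq:hain} is satisfied on every $\mathscr{C}_{i}$ at $\zeta=\zeta_{i}$, and Lemma~\ref{lemma2} produces $x_{1,i}^{d_{i}}\mid q$, where $x_{1,i}=x_{0,i}/(x_{0,i},\Delta_{i})$ and $\Delta_{i}$ denotes the absolute value of the leading coefficient of $P_{i}$. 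In particular $x_{0,i}^{d}\mid q\cdot\Delta_{i}^{d_{i}}$, since $d\le d_{i}$.

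Now I set $Q:=\mathrm{lcm}(x_{0,1},\ldots,x_{0,s})$. The crucial algebraic input is the elementary prime-by-prime identity $\mathrm{lcm}(a_{1},\ldots,a_{s})^{n}=\mathrm{lcm}(a_{1}^{n},\ldots,a_{s}^{n})$. Applied with $n=d$ and combined with the previous step, it yields $Q^{d}=\mathrm{lcm}(x_{0,1}^{d},\ldots,x_{0,s}^{d})\mid q\cdot\prod_{i=1}^{s}\Delta_{i}^{d_{i}}$, so $Q\le C_{1}q^{1/d}$ for a constant $C_{1}=C_{1}(P_{1},\ldots,P_{s})$. Because $x_{0,i}\mid Q$, the number $Qy_{0,i}/x_{0,i}$ is an integer, hence
\[
\|Q\zeta_{i}\|\le Q\cdot\left|\zeta_{i}-\frac{y_{0,i}}{x_{0,i}}\right|\le Q\cdot q^{-\lambda-1+\epsilon}.
\]
Substituting $q\ge(Q/C_{1})^{d}$ and absorbing the constant into the exponent for large $Q$ transforms this into $\|Q\zeta_{i}\|\le Q^{-(d\lambda+d-1)+\epsilon'}$ simultaneously in $i$, for any prescribed $\epsilon'>0$ provided $\epsilon$ is small and $Q$ large enough. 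For any irrational $\zeta_{i}$ the denominator $x_{0,i}\to\infty$ with $q$, whence $Q\to\infty$ along the approximating sequence (the case of all $\zeta_{i}$ rational being trivial). This yields $(\zeta_{1},\ldots,\zeta_{s})\in\mathscr{H}^{s}_{d\lambda+d-1}$, proving \eqref{eq:huigutt}. The estimate \eqref{eq:widerlegung2} then follows immediately from Jarn\'{i}k's Theorem~\ref{jarnikchen} together with the bi-Lipschitz argument recorded in the proof of Lemma~\ref{struklemma}.

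The main technical step is precisely the one that combines the $s$ individual conclusions $x_{1,i}^{d_{i}}\mid q$ into one common denominator $Q$ of size $\le C_{1}q^{1/d}$ witnessing the sharp rate simultaneously in all $s$ coordinates. A naive product $\prod_{i}x_{0,i}$ would give only $Q\le q^{s/d}$, too large by a factor of $q^{(s-1)/d}$, and would fail to recover the exponent $d\lambda+d-1$. It is the identity $\mathrm{lcm}(a_{i})^{d}=\mathrm{lcm}(a_{i}^{d})$, together with the particular choice $n=d=\min_{i}d_{i}$, that squeezes $Q$ below $q^{1/d}$ and thereby produces the sharp exponent $d(\lambda+1)-1$ in the conclusion.
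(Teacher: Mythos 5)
Your proof is correct and follows essentially the same route as the paper: apply Lemma~\ref{lemma2} separately to each planar curve $\{(X,P_{i}(X))\}$, combine the reduced denominators into one denominator via a least common multiple using the identity $\mathrm{lcm}(a_{1},\ldots,a_{s})^{d}=\mathrm{lcm}(a_{1}^{d},\ldots,a_{s}^{d})$, and transfer the approximation to this new denominator of size $\ll q^{1/d}$ to obtain the exponent $d\lambda+d-1$. The only differences are cosmetic but welcome: you carry the leading-coefficient factors $\Delta_{i}$ through the divisibility argument explicitly, thereby covering the non-monic case that the paper omits as ``solely technical'', and you settle the degenerate case $d=1$ trivially instead of the paper's reduction for $d^{\prime}=1$.
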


\begin{remark}
The inclusions $
\Pi_{s}(\mathscr{H}^{k}_{\lambda}\cap M)\subseteq \Pi_{s}(\mathscr{H}^{k}_{\lambda})\subseteq \mathscr{H}^{s}_{\lambda}$
are trivial for any parameter $\lambda$. The rise in parameter of the right hand side in case of $d>1$ is essential.
\end{remark}

\begin{remark} \label{widerlrem}
We compare the bound \eqref{eq:widerlegung2} with the (conditioned) bound \eqref{eq:widerlegung}.
Without loss of generality assume $d=d_{1}\leq d_{2}\cdots\leq d_{s}=d^{\prime}$ in Theorem~\ref{oesterreich}.
Then the bounds in \eqref{eq:widerlegung} and \eqref{eq:widerlegung2} are
\[
\frac{2}{\lambda+1}\sum_{i=1}^{s} \frac{1}{d_{i}}, \qquad \frac{1}{d_{1}}\cdot \frac{s+1}{\lambda+1}
\]
respectively. So the bound in \eqref{eq:widerlegung2} is better if and only if
\[
2d_{1}\sum_{i=1}^{s} \frac{1}{d_{i}}\geq s+1.
\]
The left hand side is contained in $[2,\infty)$ and
the condition is clearly violated if $s>1$ and $d_{2}$ is much larger than $d_{1}$. Hence, assuming the conditioned bound
in \eqref{eq:widerlegung} is correct, in general there is no equality in \eqref{eq:widerlegung2}
for any large parameter, as for curves.  
\end{remark}

The combination of the lower and upper bounds gives equality in \eqref{eq:huigut} and \eqref{eq:dasser}
if we additionally assume that all polynomials have the same degree.

\begin{corollary} \label{heilcor}
We continue to use the definitions and notation of Theorem~\ref{oesterreich}.
For any parameter $\lambda>\max\{d^{\prime}-1,1\}$ in fact we have
\begin{equation} \label{eq:huitar}
\mathscr{H}^{s}_{d^{\prime}\lambda+d^{\prime}-1}\subseteq \Pi_{s}(\mathscr{H}^{k}_{\lambda}\cap M)
\subseteq \mathscr{H}^{s}_{d\lambda+d-1},
\end{equation}
and hence
\[
 \frac{s+1}{d^{\prime}(\lambda+1)}\leq \dim(\mathscr{H}^{k}_{\lambda}\cap M)\leq \frac{s+1}{d(\lambda+1)}.
\]
In particular, if $d^{\prime}=d$, that is
all of the polynomials have the same degree, then for $\lambda>\max\{d-1,1\}$ we have
\begin{equation} \label{eq:huiguttt}
\Pi_{s}(\mathscr{H}^{k}_{\lambda}\cap M)= \mathscr{H}^{s}_{d\lambda+d-1},
\end{equation}
and
\[
\dim(\mathscr{H}^{k}_{\lambda}\cap M)= \frac{s+1}{d(\lambda+1)}.
\]
\end{corollary}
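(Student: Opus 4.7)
The plan is to assemble the two-sided inclusion \eqref{eq:huitar} by combining the just-proved Theorem~\ref{oesterreich} with the earlier Lemma~\ref{dadlemmane}, and then to read off the dimension bounds using Jarn\'ik's Theorem~\ref{jarnikchen}.

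For the upper (right-hand) inclusion $\Pi_{s}(\mathscr{H}^{k}_{\lambda}\cap M)\subseteq \mathscr{H}^{s}_{d\lambda+d-1}$ there is nothing to do: this is precisely the conclusion \eqref{eq:huigutt} of Theorem~\ref{oesterreich}, which is valid on exactly the range $\lambda>\max\{d^{\prime}-1,1\}$ prescribed in the corollary.

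For the lower (left-hand) inclusion I would apply Lemma~\ref{dadlemmane} to the same manifold $M$. The key observation is that although each $P_{i}(x_{i})$ depends on only one of the variables, it is still a polynomial in the $s$ variables $(x_{1},\ldots,x_{s})$, and its total degree as such a polynomial is $d_{i}$. The maximum total degree in the sense of Lemma~\ref{dadlemmane} is therefore $d^{\prime}=\max_{i} d_{i}$, so that lemma yields
\[
\Pi_{s}(\mathscr{H}^{k}_{\lambda}\cap M)\supseteq \mathscr{H}^{s}_{d^{\prime}\lambda+d^{\prime}-1}
\]
for every $\lambda\geq 1/k$. Since $\max\{d^{\prime}-1,1\}\geq 1\geq 1/k$, this inclusion is available throughout the range considered in the corollary, and \eqref{eq:huitar} follows.

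To pass from \eqref{eq:huitar} to the stated dimension bounds, I would note that $\Pi_{s}$ is locally bi-Lipschitz on $M$ (the argument used in the proof of Lemma~\ref{struklemma} applied to the inverse map), so projection preserves Hausdorff dimension, and then invoke Jarn\'ik's Theorem~\ref{jarnikchen}, which gives
\[
\dim \mathscr{H}^{s}_{d^{\prime}\lambda+d^{\prime}-1}=\frac{s+1}{d^{\prime}(\lambda+1)},\qquad \dim \mathscr{H}^{s}_{d\lambda+d-1}=\frac{s+1}{d(\lambda+1)},
\]
sandwiching $\dim(\mathscr{H}^{k}_{\lambda}\cap M)$ between these two values. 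When $d=d^{\prime}$ the two sides collapse to the same set and same dimension, yielding \eqref{eq:huiguttt} and the exact formula. There is really no substantial obstacle here since both inclusions are already in hand; the only point that wants a line of comment is the reinterpretation of the one-variable polynomials $P_{i}$ as $s$-variable polynomials of total degree $d_{i}$, so that Lemma~\ref{dadlemmane} applies with the parameter $d^{\prime}$.
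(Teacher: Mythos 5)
Your proposal is correct and follows essentially the same route as the paper, whose proof is exactly the combination of Theorem~\ref{oesterreich} (for the right-hand inclusion), Lemma~\ref{dadlemmane} applied with maximal degree $d^{\prime}$ (for the left-hand inclusion), and Jarn\'ik's Theorem~\ref{jarnikchen} together with the locally bi-Lipschitz projection to convert the inclusions into dimension bounds. Your explicit remark that the one-variable polynomials are $s$-variable polynomials of total degree $d_{i}$, so that Lemma~\ref{dadlemmane} applies with parameter $d^{\prime}$, is exactly the point the paper leaves implicit.
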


\begin{proof}
The claim follows as a combination of Lemma~\ref{dadlemmane}, Theorem~\ref{oesterreich} 
and Jarn\'ik~Theorem~\ref{jarnikchen}.
\end{proof}

\begin{remark}
In case of equality $d=d^{\prime}$ the correct dimension $(s+1)d^{-1}(1+\lambda)^{-1}$ from Corollary~\ref{heilcor} 
indeed improves the (conditioned) upper bound in \eqref{eq:widerlegung}, which happens to be $2sd^{-1}(1+\lambda)^{-1}$
in the present notation, for $s>1$. Note also that the case of equality of all degrees 
and the resulting bound $2sd^{-1}(1+\lambda)^{-1}$ is the optimal upper bound one can get in \eqref{eq:widerlegung}
for fixed maximal degree $d$.
\end{remark}

As indicated, using Lemma~\ref{lemma2} in a more general form,
Theorem~\ref{oesterreich} can be extended to an arbitrary number of polynomials 
in every variable. This results a possibly larger range of parameters. The proof is very similar,
only the notation becomes slightly more demanding. We will again enumerate the polynomials
as in Theorem~\ref{dadthmne} for convenience.
 
\begin{theorem} \label{claimhold}
Let $s\geq 1$ be an integer and $\sigma_{1},\ldots,\sigma_{s}$ be positive integers. 
For any $1\leq i\leq s$ let $P_{i,1},\ldots,P_{i,\sigma_{1}}\in{\mathbb{Z}[X]}$ be a set of $\sigma_{i}$ polynomials 
in one variable of degrees $1=d_{i,1}\leq \ldots\leq d_{i,\sigma_{i}}$ respectively, where
$P_{i,1}(X)=X$ for $1\leq i\leq s$. Let
\[
t_{i}:=\max_{1\leq j\leq \sigma_{i}-1} d_{i,j+1}-d_{i,j}, \qquad 1\leq i\leq s.
\]
Put $k=\sigma_{1}+\cdots+\sigma_{s}$ and further let $d:=\min_{1\leq i\leq s} d_{i,\sigma_{i}}$
and $t^{\prime}:=\max_{1\leq i\leq s} t_{i}$. 
Let 
\[
M=\left\{(P_{1,1}(x_{1}),\ldots,P_{1,\sigma_{1}}(x_{1}),P_{2,1}(x_{2}),\ldots,P_{s,\sigma_{s}}(x_{s}))
\in{\mathbb{R}^{k}}:
(x_{1},\ldots,x_{s})\in{\mathbb{R}^{s}}\right\}.
\]

Denote $\Pi_{(s)}$ the projection on the coordinates 
$(1,\sigma_{1}+1,\sigma_{1}+\sigma_{2}+1,\ldots,\sigma_{1}+\cdots +\sigma_{s-1}+1)$,
which are the places with the entries $P_{i,1}(x_{i})=x_{i}$.
Then for any parameter $\lambda>t^{\prime}$ we have
\[
\Pi_{(s)}(\mathscr{H}^{k}_{\lambda}\cap M)\subseteq \mathscr{H}^{s}_{d\lambda+d-1},
\]
and thus
\[
\dim(\mathscr{H}^{k}_{\lambda}\cap M)\leq \frac{s+1}{d(\lambda+1)}.
\]
If $d=d^{\prime}$ there is again equality in both claims.
\end{theorem}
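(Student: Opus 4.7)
The plan is to follow the strategy of Theorem~\ref{oesterreich}, but apply Lemma~\ref{lemma2} separately to each block of polynomials in one variable and then assemble a single common denominator. Given $(\zeta_{1},\ldots,\zeta_{s}) \in \Pi_{(s)}(\mathscr{H}^{k}_{\lambda}\cap M)$, for every $\epsilon>0$ I would pick arbitrarily large integers $q$ with $\Vert qP_{i,j}(\zeta_{i})\Vert \leq q^{-\lambda+\epsilon}$ for all pairs $i,j$. Since $\lambda > t^{\prime} \geq t_{i}$, the hypothesis~\eqref{eq:hain} of Lemma~\ref{lemma2} is satisfied on each curve $\mathscr{C}_{i}:=\{(P_{i,1}(x),\ldots,P_{i,\sigma_{i}}(x))\}$ at the point $\zeta_{i}$ once $q$ is large enough. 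The lemma then yields $x_{1,i}^{d_{i,\sigma_{i}}}\mid q$ for every $i$, where $y_{i}$ is the nearest integer to $q\zeta_{i}$, the reduction is $y_{i}/q=y_{0,i}/x_{0,i}$, and $x_{1,i}=x_{0,i}/(x_{0,i},\Delta_{i})$ with $\Delta_{i}:=\Delta(\mathscr{C}_{i})$.

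The central step is to control the least common multiple $L:=\mathrm{lcm}(x_{0,1},\ldots,x_{0,s})$ in terms of $q$. I expect the key estimate to be $L \leq C\cdot q^{1/d}$ for a constant $C$ depending only on the polynomials, and I anticipate this prime-by-prime bookkeeping, in particular the non-monic correction $(x_{0,i},\Delta_{i})$, to be the main technical obstacle. To prove it, for each prime $p$ and each index $i$ I would use $v_{p}(x_{0,i}) \leq v_{p}(x_{1,i}) + v_{p}(\Delta_{i})$ together with $d\cdot v_{p}(x_{1,i}) \leq d_{i,\sigma_{i}}\cdot v_{p}(x_{1,i}) \leq v_{p}(q)$, whence $v_{p}(L) = \max_{i} v_{p}(x_{0,i}) \leq v_{p}(q)/d + \max_{i}v_{p}(\Delta_{i})$. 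Multiplying over primes gives the desired inequality.

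Taking $q^{\prime}:=L$ as the common denominator, the rational $(L/x_{0,i})y_{0,i}$ is an integer because $x_{0,i}\mid L$, so that
\[
\Vert L\zeta_{i}\Vert \;\leq\; L\cdot |\zeta_{i}-y_{0,i}/x_{0,i}| \;=\; L\cdot |\zeta_{i}-y_{i}/q| \;\leq\; L\cdot q^{-1-\lambda+\epsilon}.
\]
Inserting $q\geq (L/C)^{d}$ converts the right hand side into $C^{\prime}\cdot L^{-(d\lambda+d-1)+d\epsilon}$. Provided at least one $\zeta_{i}$ is irrational, $L$ is arbitrarily large along the chosen subsequence of $q$, and letting $\epsilon\to 0$ yields $(\zeta_{1},\ldots,\zeta_{s})\in \mathscr{H}^{s}_{d\lambda+d-1}$; the case when all $\zeta_{i}$ are rational is trivial. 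The dimension bound then follows from Jarn\'ik Theorem~\ref{jarnikchen} together with the bi-Lipschitz property of $\Pi_{(s)}$ on $M$.

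For the equality statement when $d=d^{\prime}$, the matching inclusion $\mathscr{H}^{s}_{d\lambda+d-1} \subseteq \Pi_{(s)}(\mathscr{H}^{k}_{\lambda}\cap M)$ would follow from Lemma~\ref{dadlemmane} after reordering the coordinates of $M$ so as to place the identities $P_{i,1}(x_{i})=x_{i}$ first, bringing $M$ into the canonical form~\eqref{eq:maddam} of maximum total degree $d^{\prime}=d$; this permutation does not affect $\mathscr{H}^{k}_{\lambda}$. Combined with Corollary~\ref{dassaddas}, this yields equality of sets under $\Pi_{(s)}$ and of Hausdorff dimensions at $(s+1)/(d(\lambda+1))$.
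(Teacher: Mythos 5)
Your argument is correct and follows essentially the same route as the paper: apply Lemma~\ref{lemma2} blockwise to each curve $\mathscr{C}_{i}$, take the least common multiple of the resulting denominators as a common denominator, compare it with $q^{1/d}$, and conclude via Jarn\'ik's Theorem~\ref{jarnikchen}, with Lemma~\ref{dadlemmane} supplying the reverse inclusion when $d=d^{\prime}$. The only (minor) difference is that your prime-valuation estimate $L\leq C\,q^{1/d}$ replaces the paper's exact divisibility $f^{d}\mid q$, and in doing so treats the non-monic correction $(x_{0,i},\Delta_{i})$ explicitly, which the paper dismisses as an omitted technicality.
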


In the case that all $\sigma_{i}=2$ we again obtain the claim of Theorem~\ref{oesterreich},
as $t^{\prime}$ corresponds to $d^{\prime}-1$ (since $P_{i,1}$ is the identity so $d_{1,1}=1$) 
and the respective values of $d$ coincide. Still Theorem~\ref{claimhold} is only good if for every variable
there is at least one polynomial of rather large degree. We illustrate the results with three examples.

\begin{example}
Consider 
\[
M_{1}=\{(X,X^{3},X^{4},Y,Y^{6})\in{\mathbb{R}^{5}}: X,Y\in{\mathbb{R}}\}.
\]
Then $s=2, k=5, \sigma_{1}=3, \sigma_{2}=2$ and $d_{1,1}=1, d_{1,2}=3, d_{1,3}=4, d_{2,1}=1, d_{2,2}=6$
and $t_{1}=2, t_{2}=5$.
We conclude $d=\min\{4,6\}=4$ and $t^{\prime}=\max\{2,5\}=5$. Hence for $\lambda>5$, by Theorem~\ref{claimhold}
we have  
\[
\mathscr{H}^{2}_{6\lambda+5}\subseteq \Pi_{(2)}(\mathscr{H}^{5}_{\lambda}\cap M_{1})\subseteq \mathscr{H}^{2}_{4\lambda+3},
\]
with $\Pi_{(2)}$ the projection on the first and fourth coordinate, and thus 
\[
\frac{3}{6(1+\lambda)}\leq \dim(\mathscr{H}^{5}_{\lambda}\cap M_{1}) \leq \frac{3}{4(1+\lambda)}.
\]
The conditioned bound in \eqref{eq:widerlegung} would yield the weaker conclusion
\[
\dim(\mathscr{H}^{5}_{\lambda}\cap M_{1}) \leq \left(\frac{1}{3}+\frac{1}{6}\right)\frac{2}{(1+\lambda)}=
\frac{1}{\lambda+1}.
\]
\end{example}

\begin{example}
Consider 
\[
M_{2}=\{(X,X^{2},X^{3},X^{4},Y,Y^{2},Y^{3},Y^{4})\in{\mathbb{R}^{8}}: X,Y\in{\mathbb{R}}\},
\]
which can be regarded as the Cartesian product of two copies of the Veronese curve in dimension $4$.
Then $s=2, k=6$ and $d=d^{\prime}=4$ and $t^{\prime}=1$, thus by Theorem~\ref{claimhold}
\[
\dim(\mathscr{H}^{8}_{\lambda}\cap M_{2}) =\frac{3}{4(1+\lambda)}, \qquad \lambda>1.
\]
\end{example}

\begin{example} \label{dadex}
Let
\[
M_{3}=\{(X,X^{2},Y,Y^{5},Y^{7})\in{\mathbb{R}^{5}}: X,Y\in{\mathbb{R}}\}.
\]
Theorem~\ref{claimhold} implies
\[
\dim(\mathscr{H}^{5}_{\lambda}\cap M_{3})\leq \frac{3}{2(1+\lambda)}, \qquad \lambda>4.
\]
The conditioned result \eqref{eq:widerlegung} would yield the better upper bound
\[
\dim(\mathscr{H}^{5}_{\lambda}\cap M_{3})\leq \frac{2}{1+\lambda}\cdot 
\left(\frac{1}{2}+\frac{1}{7}\right)=\frac{9}{7}\cdot \frac{1}{1+\lambda}, \qquad \lambda>4.
\]
\end{example}

\section{Proofs} \label{beweischen}

\begin{proof} [Proof of Lemma~\ref{dadlemmane}]
We may assume the coefficients are integral. Let $\epsilon>0$ arbitrary but fixed.
For simplicity
let $\tau=d\lambda+d-1$.
Let $\underline{\zeta}=(\zeta_{1},\ldots,\zeta_{s})$ be any element of $\mathscr{H}^{s}_{\tau}$.
By definition of $\mathscr{H}^{s}_{\tau}$ the system
\[
\Vert q\zeta_{j}\Vert \leq q^{-\tau+\epsilon}, \qquad 1\leq j\leq s
\]
has arbitrarily large integer solutions $q$. We have
\begin{equation} \label{eq:taula}
\Vert q^{d}\zeta_{j}\Vert \leq q^{d-1}\Vert q\zeta^{j}\Vert \leq q^{-d\lambda+\epsilon}, \qquad 1\leq j\leq s.
\end{equation}
Let $Wx_{1}^{t_{1}}\cdots x_{s}^{t_{s}}$ be any
monomial of any of the polynomials $P_{1},\ldots,P_{r}$. 
By assumption $T:=t_{1}+\cdots+t_{r}\leq d$ and $W$ is an integer. Notice $\zeta_{i}^{t_{i}}\asymp 1$.
Without loss of generality assume $t_{1}>0$. First observe
\begin{equation} \label{eq:rororor}
\Vert q^{T}\cdot W\zeta_{1}^{t_{1}}\cdots\zeta_{s}^{t_{s}} \Vert 
= \Vert W(q^{t_{1}}\zeta_{1}^{t_{1}})\cdots (q^{t_{s}}\zeta_{s}^{t_{s}})\Vert
\leq W\Vert (q^{t_{1}}\zeta_{1}^{t_{1}})\cdots (q^{t_{s}}\zeta_{s}^{t_{s}})\Vert.  
\end{equation}
Recall the formula $X^{n}-Y^{n}=(X-Y)(X^{n-1}+\cdots+Y^{n-1})$. With $X=q\zeta_{i}$, 
$Y$ the closest integer to $X$ and $n=t_{i}$,
any expression $q^{t_{i}}\zeta_{i}^{t_{i}}$ can be written $A_{i}+b_{i}$ with $A_{i}\asymp q^{t_{i}}$ integers 
and $\vert b_{i}\vert\ll q^{-\tau+t_{i}-1+\epsilon}$, and $b_{i}=0$ if $t_{i}=0$. 
Let $C_{i}=A_{1}A_{2}\cdots A_{i-1}A_{i+1}\cdots A_{s}$. Expanding the product of 
$q^{t_{i}}\zeta_{i}^{t_{i}}=(A_{i}+b_{i})$
in the right hand side of \eqref{eq:rororor}, we infer
\[
\Vert q^{T}W\zeta_{1}^{t_{1}}\cdots\zeta_{s}^{t_{s}} \Vert \ll
\max_{1\leq i\leq s} b_{i}C_{i}\ll q^{-\tau+t_{i}-1+\epsilon}q^{T-t_{i}}=q^{-\tau+T-1+\epsilon}.  
\]
Since $T\leq d$ we infer
\[
\Vert q^{d}W\zeta_{1}^{t_{1}}\cdots\zeta_{s}^{t_{s}} \Vert 
\leq Wq^{d-T}\Vert q^{T}\zeta_{1}^{t_{1}}\cdots\zeta_{s}^{t_{s}} \Vert 
\ll q^{-\tau+d-1+\epsilon}\ll q^{-d\lambda+\epsilon}.  
\]
This estimate holds for all monomials.
Thus if we let $\zeta_{s+i}:=P_{i}(\underline{\zeta})$ for $1\leq i\leq r$ then
\begin{equation} \label{eq:ergaenzt}
\Vert q^{d}\zeta_{s+i}\Vert=
\Vert q^{d}P_{i}(\underline{\zeta})\Vert \ll q^{-d\lambda+\epsilon}, \qquad 1\leq i\leq r.  
\end{equation}
Combination of \eqref{eq:taula} and \eqref{eq:ergaenzt} gives that the element 
$\tilde{\zeta}:=\Pi_{s}^{-1}(\underline{\zeta})\cap M=(\zeta_{1},\ldots,\zeta_{k})$ 
lies in $\mathscr{H}^{k}_{\lambda-\epsilon/d}\cap M$, and $\Pi_{s}(\tilde{\zeta})=\underline{\zeta}$. 
Since $\underline{\zeta}$ was arbitrary in $\mathscr{H}^{s}_{\tau}$ 
the claim follows as $\epsilon$ can be chosen arbitrarily small.
\end{proof}

We turn to the upper bounds.

\begin{proof} [Proof of Theorem~\ref{oesterreich}]
For $d^{\prime}=1$ the polynomials are linear and the claim follows elementarily, so let us assume $d^{\prime}>1$.
Let $\epsilon>0$ be arbitrary but fixed.
Assume $\underline{\zeta}=(\zeta_{1},\ldots,\zeta_{k})$ is in the left hand side of \eqref{eq:huigutt}, such that
\begin{align}
\vert q\zeta_{i}-p_{i}\vert &\leq q^{-\lambda+\epsilon}, \qquad 1\leq i\leq s  \label{eq:schweren} \\
\vert qP_{i}(\zeta_{i})-p_{s+i}\vert &\leq q^{-\lambda+\epsilon}, \qquad 1\leq i\leq s  \label{eq:oeter}
\end{align}
holds for arbitrarily large integers $q$ and integers $p_{1},\ldots,p_{2s}$.
First assume additionally that all $P_{j}$ are monic.
Consider the systems \eqref{eq:schweren}, \eqref{eq:oeter} for any fixed $i$.
Notice that $d_{i}-1$ coincides with $t$ from Lemma~\ref{lemma2} in the $i$-th system.
Since $\lambda>d^{\prime}-1\geq d_{i}-1$, we may assume that $\epsilon$ is sufficiently small
that also $\lambda-\epsilon>d_{i}-1$.
It follows from Lemma~\ref{lemma2} (where $x$ corresponds to the present $q$) that
any large solution $(p_{i},p_{s+i},q)$ of such a system for fixed $1\leq i\leq s$ satisfies 
$(p_{i},q)=(M_{i}q_{i}^{d_{i}-1}y_{i},M_{i}q_{i}^{d_{i}})$ with integers $q_{i}, M_{i},y_{i}$.
Consequently the identity
\begin{equation} \label{eq:fritzdiek}
\vert q\zeta_{i}-p_{i}\vert= M_{i}q_{i}^{d_{i}-1} \vert q_{i}\zeta_{i}-y_{i}\vert, \qquad 1\leq i\leq s,
\end{equation}
holds. Let $f$ be the lowest common multiple of the $q_{i}, 1\leq i\leq s$. Then $q_{i}\vert f$ for
all $i$. On the other hand, since $q_{i}^{d_{i}}\vert q$, also the lowest common multiple of the integers 
$q_{i}^{d_{i}}$ divides $q$. In particular the lowest common multiple of the integers $q_{i}^{d}$
divides $q$. Since taking a fixed power of any element of a set 
and taking lowest common multiples of the set commutes,
the considered lowest common multiple coincides with $f^{d}$. Thus we have just proved $f^{d}\vert q$.
Write $rf^{d}=q$ for an integer $r$. Moreover, as $q_{i}\vert f$, we may write
$f=r_{i}q_{i}$ for $1\leq i\leq s$ and integers $r_{i}$. Combining these equations we
can write $q=(rr_{i}^{d})q_{i}^{d}$. To sum up we have
\begin{equation} \label{eq:atzatz}
\frac{q}{f}=rf^{d-1}=rr_{i}^{d-1}q_{i}^{d-1}=\frac{M_{i}q_{i}^{d_{i}-1}}{r_{i}}. 
\end{equation}
With \eqref{eq:fritzdiek} and \eqref{eq:atzatz} we compute
\[
\vert f\zeta_{i}-r_{i}y_{i}\vert = r_{i}\vert q_{i}\zeta_{i}-y_{i}\vert
= \frac{r_{i}}{M_{i}q_{i}^{d_{i}-1}} \vert q\zeta_{i}-p_{i}\vert=\frac{f}{q}\vert q\zeta_{i}-p_{i}\vert
=\frac{1}{rf^{d-1}}\vert q\zeta_{i}-p_{i}\vert, \qquad 1\leq i\leq s.
\]
Thus
\begin{equation} \label{eq:romans}
-\frac{\log \Vert f\zeta_{i}\Vert}{\log f}=
-\frac{\log \Vert q\zeta_{i}\Vert-(d-1)\log f-\log r}{\log f}
\geq -\frac{\log \Vert q\zeta_{i}\Vert}{\log f}+d-1.
\end{equation}
However, the first expression on the right hand side can be estimated with \eqref{eq:schweren} 
and $q=rf^{d}\geq f^{d}$, which yields
\[
-\frac{\log \Vert q\zeta_{i}\Vert}{\log f}=-\frac{\log \Vert q\zeta_{i}\Vert}{\log q}\cdot \frac{\log q}{\log f}
\geq (\lambda-\epsilon)\cdot d.
\]
Inserting this in \eqref{eq:romans} we end up with 
\[
-\frac{\log \Vert f\zeta_{i}\Vert}{\log f} \geq d\lambda+d-1-\epsilon d, \qquad 1\leq i\leq s.
\]
Since the values of $f$ obviously tend to infinity as $q$ does and $\epsilon$ can be made arbitrarily small,
this is equivalent to the fact that 
$(\zeta_{1},\ldots,\zeta_{s})=\Pi_{(s)}(\underline{\zeta})$ belongs to the set $\mathscr{H}^{s}_{d\lambda+d-1}$.
Since $\underline{\zeta}$ was an arbitrary element of $\mathscr{H}^{k}_{\lambda}\cap M$ the proof 
of the case where all $P_{j}$ are monic is complete. 

In the general case, i.e. if the polynomials are not necessarily monic, one may proceed very similarly,
where $q_{i}$ has to be slightly adapted in case of $\Delta\neq 1$ in view of Lemma~\ref{lemma2}.
However, since the denominator in \eqref{eq:xeins} is bounded by $\Delta$ which depends on $M$ only,
this is not of much relevance. We omit the solely technical proof.
\end{proof}

\begin{proof} [Proof of Theorem~\ref{claimhold}]
We basically follow the proof of Theorem~\ref{oesterreich}.
Consider for any $1\leq i\leq s$ more generally the system
\[
\vert qP_{j}(\zeta_{i})-p_{i,j}\vert \leq q^{-\lambda+\epsilon}, \qquad 1\leq j\leq \sigma_{i}. 
\]
Again it follows from Lemma~\ref{lemma2} that for $\lambda>t^{\prime}\geq t_{i}$ and small $\epsilon>0$  
the solution vectors satisfy
$(p_{i,1},q)=(M_{i}q_{i}^{d_{i,\sigma_{i}}-1}y_{i},M_{i}q_{i}^{d_{i,\sigma_{i}}})$
for integers $M_{i},q_{i},y_{i}$. Again
if $f$ is the lowest common multiple of the $q_{i}^{d_{i,\sigma_{i}}}$, 
we may write $r_{i}q_{i}=f$ and $f^{d}\vert q$ by the same argument as in Theorem~\ref{oesterreich}.
The rest of the proof is identical to that of Theorem~\ref{oesterreich}.
\end{proof}


\begin{thebibliography}{99}

\bibitem{berdod} V. I. Bernik and M. M. Dodson. Metric Diophantine approximation on manifolds,
Cambridge Tracts in Mathematics {\bf 137}, Cambridge University Press (Cambridge, 1999).

\bibitem{budi} N. Budarina and D. Dickinson. Simultaneous Diophantine approximation on surfaces defined by polynomial expressions $x_{1}^{d}+\cdots+x_{m}^{d}$. {\em Analytic and probabilistic methods in Number Theory} 17--23,
TEV, Vilnius (2007). 

\bibitem{bu} N. Budarina, D. Dickinson and J. Levesley. Simultaneous Diophantine approximation
on polynomial curves. {\em Mathematika} {\bf 56} (2010), 77--85.

\bibitem{bug} Y. Bugeaud. On simultaneous rational approximation to a real numbers and its integral powers. 
{\em Ann. Inst. Fourier (Grenoble)} (6) {\bf 60} (2010), 2165--2182.   

\bibitem{falconer} K. Falconer. {\em Fractal Geometry}. Wiley (New York, 1989).

\bibitem{jarnik} V. Jarn\'ik. \"Uber die simultanen Diophantische Approximationen. 
{\em Math. Z.} {\bf 33} (1931), 505--543.

\bibitem{khint} Y. A. Khintchine. Zur metrischen Theorie der diophantischen Approximationen.
{\em Math. Z.} {\bf 24} (1926), 706--714.

\bibitem{lang} S. Lang. Reports on Diophantine approximation. {\em Bull. Soc. Math. France} {\bf 93} (1965).

\bibitem{mattila} P. Mattila. {\em Geometry of Sets and Measures in Euclidean spaces.} 
Cambridge studies in advanced mathemtics {\bf 44}, Cambridge University press (1995).

\bibitem{schlei} J. Schleischitz. On the spectrum of Diophantine approximation constants. 
{\em to appear in Mathematika. arXiv: 1409.1472}

\bibitem{schlei2} J. Schleischitz. Diophantine approximation on polynomial curves.
{\em arXiv: 1503.01622}

\end{thebibliography}
\end{document}